\title{A mixed $\ell_1$ regularization approach for sparse simultaneous approximation of parameterized PDEs}
\author{
Nick~Dexter\thanks{Department of Mathematics, University of Tennessee, Knoxville, TN 37996. email: \texttt{ndexter@utk.edu}.} 
\and Hoang~Tran\thanks{Department of Computational and Applied Mathematics, Oak Ridge National Laboratory, Oak Ridge TN 37831-6164. email: \texttt{tranha@ornl.gov}. }
\and Clayton~Webster\thanks{Department of Mathematics, University of Tennessee, Knoxville, TN 37996 and Department of Computational and Applied Mathematics, Oak Ridge National Laboratory, Oak Ridge TN 37831-6164.  email: \texttt{cwebst13@utk.edu}, \texttt{webstercg@ornl.gov}.}
}
\begin{document}

\maketitle
\begin{abstract}
We present and analyze a novel sparse polynomial technique for the simultaneous approximation of 
parameterized partial differential equations (PDEs) with deterministic and stochastic inputs.  Our approach treats the numerical solution as a jointly sparse reconstruction problem through the reformulation of the standard basis pursuit denoising, where the set of jointly sparse vectors is infinite.  To achieve global reconstruction of sparse solutions to parameterized elliptic PDEs over both physical and parametric domains, we combine the standard measurement scheme developed for compressed sensing in the context of bounded orthonormal systems with a novel mixed-norm based $\ell_1$ regularization method that exploits both energy and sparsity.
In addition, we are able to prove that, with minimal sample complexity, error estimates comparable to the best $s$-term and quasi-optimal approximations are achievable, while requiring only {\em a priori} bounds on polynomial truncation error with respect to the energy norm.
Finally, we perform extensive numerical experiments on several high-dimensional parameterized elliptic PDE models to demonstrate the superior recovery properties of the proposed approach.
\end{abstract}

\section{Introduction}
\label{sec:introduction}
This paper is concerned with the simultaneous numerical solution of a family of partial differential equations (PDEs) on a domain $D\subset\mathbb{R}^n,\, n\in \{1,\,2,\,3\}$, which arises in a magnitude of applications involving deterministic and stochastic parameters. In particular, let $\cU  = \prod_{j=1}^d \cU_j  \subset \mathbb{R}^d$ be a tensor product parametric domain and ${\cal D}$ be a differential operator with respect to $x\in D$. We consider the following parameterized boundary value problem:  
for all ${\bm y}=(y_1,\ldots, y_d)\in\cU$, find $u(\cdot, {\bm y}):{\overline{D}}\rightarrow\mathbb{R}$, 
such that 
\begin{equation}
\label{eq:genPDE}
\cD(u,\by)=0,  \quad\mbox{in } D,
\end{equation}
subject to suitable boundary conditions. The solution map ${\bm y}\mapsto u(\cdot, {\bm y})$, defined from $\cU$ into the solution space $\cV$, typically a Sobolev space 
(e.g., $H^1_0(D)$), is now well-known to be smooth for a wide class of parameterized PDEs, see, e.g., 
\cite{CDS11,HoangSchwab14,HoangSchwab12,Gunzburger:2014hi,TWZ17}. 
In such cases, global polynomial approximation that exploits the smoothness of the solutions with respect to the parametric vector 
${\bm y}\in\cU$, are appealing approaches for solving \eqref{eq:genPDE}.  Such techniques utilize a linear combination of suitable deterministic polynomial basis functions and often feature fast convergence with reasonable cost. 
Examples of common truncated global polynomial approximation methods include Taylor expansions of multivariate monomials 
\cite{CDS11,CCDS13}, projections onto an $L^2(\cU, \varrho)$ orthonormal basis with respect to a weight $\varrho\in L^{\infty}(\mathbb{R}_+)$, 
as well as collocating at a set of nodes associated with tensorized  Lagrange polynomials 
\cite{BNT07,NTW08,Webster:2007tq}.

However, the accuracy of all these approximations heavily depend on the choice of polynomial subspace used in their construction. A na\"{i}ve selection of such a subspace can lead to an inefficient approximation scheme, while finding a good choice can often be a challenging computational task, especially when the dimension of the parameter domain is high, as detailed in \cite{MR2421041,CCDS13,TWZ17}. 
Polynomial approximation via compressed sensing (CS), see, e.g.,\cite{RauWard12,YGX12,RW15,Adcock15,Adcock15b,Adcock:2018gq,ChkifaDexterTranWebster18,JakemanNarayanZhou16}, is a new and promising approach to circumvent this problem.
Under the condition that the parameterized solution is compressible, i.e., well-represented by a sparse expansion in a given orthonormal system, these methods are able to reconstruct the largest terms of such expansions from underdetermined systems through convex optimization, iterative thresholding, or greedy selection approaches.
The key feature of CS approximations is their ability to exploit sparsity, allowing approximation on large, possibly far from optimal polynomial subspaces, with far fewer samples than the subspace dimension.
As the sparsity assumption is pertinent to many parameterized systems, see, e.g., \cite{CDS11,CCS14,BCDM16}, it is no surprise that 
CS-based polynomial approximation has attracted growing interest in the area of computational high-dimensional PDEs in recent years \cite{DO11, MG12,  YK13, Rauhut2017, PHD14, HD15, PengHampDoos15}.

Yet, there is still a significant gap between current CS-based techniques and the recovery of fully discrete approximations to solutions of parameterized PDE problems. As traditionally developed in the field of signal and image processing \cite{CRT06,Donoho06}, CS has primarily been concerned with the reconstruction of real and complex sparse vectors. The solutions of parameterized PDEs, on the other hand, are often elements of function spaces $\cV$, e.g., Hilbert spaces. The object to be recovered in this setting, thus, is a ``signal'' whose coordinates are, e.g., Hilbert-valued (referred to as \textit{Hilbert-valued vector} or \textit{Hilbert-valued signal} herein). 
This mismatch raises the following critical issue: standard signal recovery approaches, when applied in parameterized PDE contexts, do not allow direct approximation of the entire solution map $\by \mapsto u(\cdot, \by) \in \cV$, solving \eqref{eq:genPDE} in precise terms.
Instead, standard CS-based approaches are only capable of approximating functionals of the solution, i.e., maps of the form $\by \mapsto G(u(\by)) \in \mathbb{R}$, where $G: \cV \to \mathbb{R}$ is a functional of $u$. In particular, many of the existing works (cited above) perform \textit{point-wise reconstruction} of the solution, i.e., reconstruction of the map $\by \mapsto G_{x^\star}(u(\by))$ with $G_{x^\star}(u(\by)) = u(x^{\star},\by) $ for a fixed $x^{\star} \in D$.

Fully discrete {\em point-wise compressed sensing} (PCS) approximations can be obtained from the point-wise reconstructions, provided the reconstructions are performed at all prescribed abscissas in the physical discretization, using, e.g., least squares regression or piecewise polynomial interpolation.
Nonetheless, we find that this practice has some distinct limitations. 
First, the decay of the polynomial coefficients may vary significantly over points in $D$, leading to different levels of accuracy among pointwise reconstruction and, consequently, less efficient global evaluation. Secondly, {\em a priori} estimates of the tail expansion are required at every selected node, and the combined cost over all nodes can be hefty. On the other hand, if, instead, an inexpensive, conservative estimate is used as a uniform bound of all relevant point-wise tail expansions, much less accurate approximations are expected.

In this paper, we present and analyze a novel sparse approximation technique for {\em global reconstruction} of solutions to parameterized PDEs, i.e., approximation over the entire physical domain $D$ of the map $\by\mapsto u(\by)$ 
solving \eqref{eq:genPDE}. Our approach, denoted {\em simultaneous compressed sensing} (SCS), is based on a reformulation of the standard {\em basis pursuit denoising} (BPDN) constrained convex optimization problem. 
The key difference is in the choice to regularize with respect to a mixed, sparsity-inducing norm involving the energy norm associated with the space $\cV$.
We establish the theoretical support of this strategy via several extensions of compressed sensing concepts such as the {\em restricted isometry property} (RIP) and {\em null space property} (NSP) for real and complex sparse vectors to the Hilbert-valued setting, under which sample complexity and convergence estimates are acquired.
Our results show that the sample complexity requirements enjoyed by compressed sensing extend naturally to the problem of simultaneous compressed sensing, implying that the number of samples needed to reconstruct to a desired sparsity level $s$ scales only logarithmically with the size of polynomial subspace.
Moreover, extension of standard error estimates for basis pursuit denoising to the SCS setting shows that approximations obtained with our approach achieve errors comparable to the best $s$-term approximations and expansion tail in energy norms, which are often the norms of interest in parameterized PDE problems, \cite{CDS11}. 
It seems non-trivial to obtain such approximation results with the standard point-wise CS-based  reconstruction approach. 
 
However, this approach requires solving an unconventional $\ell_1$-minimization problem involving Hilbert-valued signals, where the mixed $(\mathcal{V}, \ell_1)$ regularization term is the sum of energy norms of the vector coordinates. To address this challenge, we derive and implement an iterative minimization procedure, based on an extension of the {\em fixed point continuation} (FPC) \cite{HYZ08} and {\em Bregman iterative regularization} \cite{YOGD08} algorithms to Hilbert-valued setting. 
The proposed approach is also compatible with many popular methods of physical discretization, including, e.g., the finite element, finite difference, and finite volume methods.

We also compare the performance of our SCS technique with the PCS approach when both approximations are constructed with an identical finite element discretization of the physical domain. 
We observe that, while they often feature similar computational costs, the coupled approach always displays remarkably better accuracy, thus should be the method of choice in global reconstruction. These results also verify the effectiveness of our proposed mixed $(\mathcal{V}, \ell_1)$-minimization solver. 
The accomplished results are positive, demonstrating that SCS is particularly attractive in highly anisotropic high-dimensional parameterized PDE problems. 
As we shall see through extensive numerical experiments, SCS is able to correctly identify the most important terms of sparse polynomial expansions of the solution to \eqref{eq:genPDE}, without an adaptive selection procedure. For some highly anisotropic parametric PDEs, our results show that SCS produces approximations which are comparable or superior to several others examined, including standard stochastic Galerkin and stochastic collocation methods.

Finally, the outline of the paper is as follows. In Section \ref{sec:setting} we introduce the mathematical problem and the main notation used throughout. {While the current work focuses on sparse approximation of solutions to parameterized elliptic PDE models, we remark that the developments herein may be readily applied to any parameterized systems satisfying a sparsity assumption.}
In Section \ref{sec:methodology}, we describe the main contribution of this work; our novel sparse Hilbert-valued vector reconstruction approach, including the reformulation of the BPDN problem necessary to achieve our results.
We also describe similarities between SCS and the problem setting of joint-sparsity, noting the differences between both problems when combined with finite element approximations.
In Section \ref{sec:Framework}, we establish the theoretical foundation for the global reconstruction of Hilbert-valued signals through SCS. There, sample complexity guarantees and convergence estimates are proved through simple extensions of results and concepts from compressed sensing. Section \ref{sec:algorithms} is devoted to describing a fast iterative algorithm for finding sparse approximations of parameterized PDE solutions. 
Finally, numerical results and comparisons illustrating the theoretical results and demonstrating the efficiency of our method are given in Section \ref{sec:Hilbert_valued_numerical_results}.   

\section{Background on parameterized PDE problems}
\label{sec:setting}
As discussed in the previous section, the sparse approximation methods discussed in this work are relevant to solutions of parameterized PDEs obeying a certain sparsity assumption. 
Such sparsity results have been shown for a variety of parameterized PDE problems under additional assumptions on the parametric dependence.
In this section, we consider the simultaneous solution of a parameterized linear elliptic PDE, having sufficient parametric regularity to establish the desired sparsity results. 
In particular, for all $\by\in\cU$, find $u(\cdot,\by): D \to \R$ such that 
\begin{align}
\label{eq:model_problem}
\left\{\begin{array}{rll} 
-\nabla \cdot \left( a(x,\by) \nabla u(x,\by) \right) \hspace{-0.25cm} &= f(x)  &\forall x\in D, \; \by \in \cU \\
                                                u(x,\by) \hspace{-0.25cm} &= 0  &\forall x\in \partial D, \; \by \in \cU,
\end{array}\right.
\end{align}
where $f\in L^2(D)$ is a fixed function of $x$, and $D\subset \R^n$, $n\in\{1,2,3\}$, is a bounded Lipschitz domain.
We will often suppress the dependence on $x\in D$, writing $a(\by) = a(\cdot,\by)$ and similarly $u(\by) = u(\cdot,\by)$.
Problem \eqref{eq:model_problem} is relevant to both {\em deterministic} and {\em stochastic} modeling contexts, see, e.g., \cite{Gunzburger:2014hi} for more details.

We focus on problem \eqref{eq:model_problem} under the following assumptions, guaranteeing well-posedness and parametric regularity of the solution $u$ (and hence convergence of global polynomial approximations to $u$), see, e.g., \cite{CDS11,TWZ17}:
\begin{enumerate}[label=(A\arabic*)]
\item \label{ass:A1} {\em There exist constants $0<a_{\min} \leq a_{\max}<\infty$ such that $a_{\min} \leq a \leq a_{\max}$ uniformly in $\overline{D}\times \cU$;} and 
\item \label{ass:A2} {\em The complex continuation of $a$, represented as the map $a^*:\C^d \to L^\infty(D)$, is an $L^\infty(D)$-valued holomorphic function on $\C^d$.}
\end{enumerate}

\noindent The holomorphic dependence of $\by$ on the coefficient $a$ holds for many common examples of parametric dependence, including polynomial, exponential, and trigonometric functions of the variables $y_1,\ldots,y_d$. 
Under Assumption \ref{ass:A2}, one can show that the solution map $\bz \mapsto u(\bz)$ from \eqref{eq:model_problem} is holomorphic in an open neighborhood of the domain $\cU$, see \cite{CDS11,TWZ17} for more details.

Finally, in this setting we set $\cV = H_0^1(D)$, the space of square integrable functions of $x\in D$, having zero trace on the boundary $\partial D$, and square integrable distributional derivatives. We also 
use the notation $L^2_\varrho(\cU;\cV)$ to denote the weighted Bochner space of mappings $\by\mapsto u(\cdot,\by)\in\cV$. 
The {\em parametric weak form} of problem \eqref{eq:model_problem} is given by: {\em find $u\in \Htworho$ such that $\forall v\in \Htworho$} 
\begin{equation}
\begin{aligned}
\label{eq:weak_problem}
\int_\cU \mathcal{B}[u,v](\by) \varrho(\by)\; d\by = \int_\cU F(v) \varrho(\by)\; d\by,
\end{aligned}
\end{equation}
where
$
\mathcal{B}[u,v](\by) = \int_D a(x,\by) \nabla u(x,\by) \cdot \nabla v(x,\by) \;dx, \text{ and } F(v) = \int_D f(x) v(x,\by) \;dx.
$
For convenience, we will often use the abbreviation $\mathcal{B}(\by) = \mathcal{B}[\cdot,\cdot](\by)$. 
Assumption \ref{ass:A1} and the Lax-Milgram lemma ensure the existence and uniqueness of the solution $u$ to \eqref{eq:weak_problem} in $\Htworho$.

\section{Problem setting and methodology}
\label{sec:methodology}
The main contribution of this work, discussed in this section, is an extension of standard compressed sensing theory to the recovery of sparse vectors $\bc$ whose coefficients $(\bc_\bmnu)_{\bmnu\in\cJ}$, 
for any finite subset 
$\cJ \subset \cF:=\mathbb{N}_0^d $,
belong to a general Hilbert space $\cV$, i.e., in the context of \eqref{eq:model_problem} are functions of $x\in D$.
Through this extension, we are able to derive problems, and the resulting solution algorithms, which enable sparse recovery of Hilbert-valued signals. We call our approach {\em simultaneous compressed sensing} (SCS), which can be viewed as a generalization of the well-known challenge of joint-sparse recovery, see, e.g., \cite{Cotter2005,Gribonval2008,MishaliEldar08,EldarRauhut10}.
The key to our approach is a reformulation of the standard {\em basis pursuit denoising} (BPDN) problem, see, e.g., \cite[Section 4.3]{FouRau13}, in terms of a mixed norm, denoted $(\mathcal{V}, \ell_1)$, involving the $\ell_1$ sum of the energy norms of components. Our approach enables simultaneous reconstruction of solutions to parameterized PDEs over $D\times \cU$, and can be viewed as an energy norm-based method of sparse regularization.
Later in this section we will also provide a detailed discussion on the combination of our approach with finite element approximations.

In what follows we present our approach in the context of problem \eqref{eq:model_problem} as follows. 
We begin by expanding the solution $u(x,\by)$ of \eqref{eq:model_problem} in 
an ${L_{\varrho}^2({\mathcal U})}$-orthonormal basis  $(\Psi_\bmnu)_{\bmnu\in\mathcal{F}}$ according to
\be
\label{eq:expansionPC}
u (x,{\bm y}) = \sum_{\bmnu \in \mathcal{F}} \bc_\bmnu(x) \Psi_\bmnu ({\bm y}),
\ee
where $\Psi_\bmnu = \prod_{j=1}^d \Psi_{\nu_j}$ are tensor products of  ${L^2_{\varrho_j}({\mathcal U_j})}$-orthonormal polynomials, and the coefficients $\bc_\bmnu$ belong to the space $\cV$. The series 
\eqref{eq:expansionPC} is sometimes referred to as the {\em generalized polynomial chaos} 
(GPC) expansion of $u$ (see, e.g., \cite{GS02,XK02,Wiener_38}), whose 
convergence rates are well-understood. We are particularly interested in the cases that $\varrho$ is the uniform density, and for the sake of clarity we make use of the corresponding orthonormal Legendre basis of $L_{\varrho}^2(\mathcal{U})$ as $(L_\bmnu)_{\bmnu\in \cF}$. 

For any finite subset $\cJ \subset \cF$, we define the $\cV$-valued polynomial subspace 
\begin{equation}
\label{eq:V_J}
    \cV_{\cJ} := \left\{\sum_{\bmnu\in \cJ} \hat{\bc}_\bmnu(x) \Psi_\bmnu(\by)\, :\, \hat{\bc}_\bmnu \in \cV \right\},
\end{equation}
and, with an abuse of notation, often use $\cV^N$ to denote a particular $\cV_\cJ$ with cardinality $N$, i.e., 
$N = \#(\cJ)$.
The {Galerkin} projection of $u$ onto $\cV_{\cJ}$ is given by
\begin{align}
\label{eq:Galerkin_approx}
u_{\cJ} (x,{\bm y}) = \sum_{\bmnu \in \cJ} \bc_\bmnu(x) \Psi_\bmnu ({\bm y}).
\end{align}
There are many methods for approximately resolving the Hilbert-valued vector of coefficients $\bc = (\bc_{\bmnu})_{\bmnu\in\cJ}\in \cV^N$, for more details see, e.g., \cite{Gunzburger:2014hi,SpectralUQ}.
An efficient evaluation of $u_{\cJ}$ would require the index set $\cJ$ to enclose all effective multi-indices (i.e., corresponding to the largest $\|\bc_\bmnu\|_{\cV}$) so that the tail error
\begin{align}
\label{eq:Galerkin_residual}
\|u - u_{\cJ} \|_{L_\varrho^2(\cU;\cV)}^2 = \| u_{\mcJ^c} \|_{L_\varrho^2(\cU;\cV)}^2 = \left\| \sum_{\bmnu \not \in \cJ} \bc_\bmnu \Psi_\bmnu \right\|_{L_\varrho^2(\cU;\mcV)}^2 = \sum_{\bmnu \not \in \mcJ} \|\bc_\bmnu\|_{\mcV}^2
\end{align}
is minimized. 
This condition can be easily fulfilled if we are able to select $\cJ$ to be a very large index set. 
However, for traditional approximation approaches, the size of $\cJ$ is often constrained by the computational budget; as the cost of computing grows at least linearly in cardinality $N$, while $N$ often grows exponentially in the dimension $d$ for many choices of $\cJ$, see, e.g., \cite{BNTT_comp,CCH14,CDL13,Dexter2016,Gunzburger:2014hi}. This consideration of cost is described in the following two remarks and motivates the use of adaptive strategies which seek to construct $\cJ$ in an optimal or near optimal manner.

\begin{remark}[Best $s$-term approximations]
\label{rem:best_s_term_approx}
Given oracle knowledge of the sequence $(\bc_\bmnu)_{\bmnu\in\cF}$, the optimal choice, denoted the {\em best $s$-term approximation}, constructs $\cJ_s$ corresponding to the $s$ most important terms by minimizing \eqref{eq:Galerkin_residual}.
The seminal works \cite{CDS10,CDS11} established algebraic, dimension-independent, rates of convergence for such best $s$-term Legendre and Taylor approximations of solutions to elliptic PDEs, having affine dependence on infinitely many variables $y_i$. 
Subsequent studies have extended best $s$-term estimates to parameterized parabolic and hyperbolic PDEs, non-affine parameterizations, and also Chebyshev approximations of such systems, see, e.g., \cite{BAS09,CCDS13,CCS14,HS13,HS13b,HoangSchwab12,HoangSchwab14,HS13c,Todor2007}. 
The basic strategy of analyzing best $s$-term approximations is to establish the $\ell^p$-summability, for the smallest $p\in (0,1)$, of the sequence $(\|\bc_\bmnu\|_\cV)_{\bmnu\in\cF}$.
The Stechkin inequality (see, e.g., \cite{DeV98}), can then be applied to obtain a rate of the form
\begin{align}
\label{eq:best_s_term_error}
\| u - u_{\cJ_s} \|_{L^2_\varrho(\cU;\cV)} \le \left\| ( \|\bc_\bmnu\|_\cV )_{\bmnu\in\cF} \right\|_{\ell^p(\cF)} s^{1/2-1/p},  \qquad \forall s \in \N,
\end{align}
where $u_{\cJ_s}$ is the best $s$-term approximation to $u$.
In general, explicit estimates of $\|(\|\bc_\bmnu\|_\cV)_{\bmnu\in\cF} \|_{\ell^p(\cF)}$ are unavailable without access to $(\bc_\bmnu)_{\bmnu\in\cF}$.
Moreover, \eqref{eq:best_s_term_error} often holds for a continuum of values of $p$, with stronger rates (smaller $p$) coupled with larger coefficients.
Nonetheless, estimates of this form provide a useful theoretical benchmark of performance for sparse approximations in the infinite-dimensional setting. 
\end{remark}

\begin{remark}[Quasi-optimal approximations]
\label{rem:quasi_optimal_approx}
On the other hand, given sharp bounds on the coefficients $\|\bc_\bmnu\|_\cV$, one can construct the set $\cJ_s$ corresponding to the $s$ largest of these bounds. 
The resulting approximations, denoted {\em quasi-optimal approximations}, see, e.g., \cite{BNTT14,BTNT12,TWZ17},  are relevant to a wide class of PDEs having finite-dimensional parametric dependence.
Consider a multi-indexed sequence of coefficient estimates on $(\|\bc_\bmnu\|_\cV)_{\bmnu\in\cF}$ written in the form $(e^{-b(\bmnu)})_{\bmnu\in\cF}$, i.e., satisfying
\begin{align}
\label{eq:coeff_bounds}
\|\bc_\bmnu\|_{\cV} \lesssim e^{- b (\bmnu)} \qquad \forall \bmnu\in\cF.
\end{align} 
When $b$ satisfies \cite[Assumption 3]{TWZ17}, 
and $\cJ_s$ is constructed from the $s$ largest bounds of the sequence $(e^{-b(\bmnu)})_{\bmnu\in\cF}$, one can apply \cite[Theorem 2]{TWZ17} to show that for any $\varepsilon>0$ there exists an $s_\varepsilon>0$ such that 
\begin{align}
\label{eq:quasi_optimal_rate}
\left\| u - \sum_{\bmnu \in \cJ_s} \bc_\bmnu \Psi_\bmnu \right\|_{L^2_\varrho(\cU;\cV)} \le C_\varepsilon \sqrt{s} \exp \left[ - \left( \frac{\kappa \, s}{(1 + \varepsilon)} \right)^{1/d} \right],
\end{align}
for every $s> s_\varepsilon$, where $C_\varepsilon >0$ is a very mild constant independent of $s$.
Here $\kappa$ corresponds to the volume of a limiting polytope $\mathcal{Q}$ bounding the sequence $(e^{-b(\bmnu)})_{\bmnu\in\cJ_s^c}$. Optimal values of $\kappa$ can be shown, given explicit forms of the function $b(\bmnu)$, see, e.g., the results of \cite[Propositions 4 \& 5]{TWZ17} where $\kappa$ is derived in the cases of the Taylor and Legendre systems.
This estimate is asymptotically sharp as $s\to \infty$, providing the optimal rate in the finite-dimensional setting, and carries an explicit dependence on the dimension. 
\end{remark}

Compressed sensing represents a new and promising strategy to approximate sparse polynomial expansions without any complicated a priori subspace selection. This approach is advantageous in that the growth of sample complexity with respect to the size of $\cJ$ is mild (logarithmic), thus allowing the approximation of $u$ on a large, possibly far from optimal index set. The sparsity pattern of the polynomial expansion can then be detected and the large coefficients can be accurately recovered via convex optimization procedures.

The extension of the compressed sensing framework to the global reconstruction of parameterized solutions $u$ to \eqref{eq:model_problem} is fairly straightforward.
For any $N\in \mathbb{N}_0$, $p\ge 1$, and $\bz,\bz' \in \cV^{N}$, define the norm and inner product
\begin{align}
\label{eq:norm_and_inner_product_VN}
\|{\bz}\|_{\cV,p} := 
\left(\sum_{\bmnu\in\cJ} \| \bz_\bmnu\|_{\cV}^p\right)^{1/p},
\quad
\text{and}
\quad 
\langle \bz,\bz' \rangle_{\cV,2} :=  
\sum_{\bmnu\in\cJ} \langle \bz_\bmnu , \bz'_\bmnu \rangle_\cV.
\end{align} 
In the case of $\cV = H_0^1(D)$, certainly $\langle \bz_\bmnu, \bz_\bmnu'\rangle_\cV :=  \int_D \nabla \bz_\bmnu \cdot \nabla \bz'_\bmnu \; {d}x$ and $\|\bz_\bmnu\|_{\cV}=\sqrt{\langle\bz_\bmnu,\bz_\bmnu\rangle_\cV}$.
Let $\supp(\bz) := \{\bmnu \in \cJ : \bz_\bmnu \not\equiv \bm{0}\}$ denote the support of $\bz\in\cV^N$ and $\|{\bz}\|_{\cV,0} := \#(\supp(\bz))$.
Given $p >0$, we define the $\|\cdot\|_{\cV,p}$-error of the best $s$-term approximation to $\bz\in\cV^N$ as
\begin{equation}
\sigma_s(\bz)_{\cV,p} = \inf_{\hat\bz\in\cV^N,\|\hat\bz\|_{\cV,0}\le s} \|\bz - \hat\bz\|_{\cV,p}.
\end{equation}
We say that $\bz\in\cV^N$ is $s$-sparse when $\|\bz\|_{\cV,0} \leq s$, and compressible when $\sigma_s(\bz)_{\cV,p} \to 0$ quickly in $s$.

The measurement scheme for SCS recovery borrows directly from compressed sensing-based polynomial approximation schemes, which we repeat as follows. 
Given an index set $\cJ$, with $N=\#(\cJ)$ large enough to ensure the tail $u_{\cJ^c}$ is negligible, for some $m\ll N$, we generate $m$ i.i.d. random samples $({\bm y}_k)_{k=1}^m$ from the measure $\varrho$ in the parametric domain $\mathcal{U}$.
The goal is then to find an approximation $u_\cJ^{\#}$ of $u_\cJ$ from \eqref{eq:Galerkin_approx} of the form
\begin{align}
\label{eq:approx_of_Galerkin}
 u^{\#}_\cJ(x,\by) = \sum_{\bmnu\in \cJ} \bc^{\#}_\bmnu(x) \Psi_\bmnu(\by),
\end{align}
which is comparable to best $s$-term approximation to $u$.
Let $\bA \in \R^{m\times N}$ be the normalized sampling matrix containing the samples of the basis $(\Psi_{\bmnu})_{\bmnu\in\cJ}$, $\bu$ the normalized observations of the parameterized solution at the points $\by_i$, and $\bm{e}$ the normalized expansion tail, given by
\begin{equation}
\label{eq:defA}
\bA := \left(\frac{\Psi_\bmnu(\by_i)}{\sqrt{m}}\right)_{\substack{1\leq i\leq m\\ \bmnu \in \cJ}}, \quad 
\bu := \left(\frac{u(\by_i)}{\sqrt{m}}\right)_{1\le i \le m}, \quad\text{and}\quad 
\bm{e} := \left(\frac{u_{\cJ^c}(\by_i)}{\sqrt{m}}\right)_{1\le i \le m}
\end{equation}
respectively.
Then $\bu,\bm{e}\in\cV^m$, and the vector $\bc=(\bc_\bmnu)_{\bmnu\in\cJ} \in \mathcal{V}^N$ of exact coefficients satisfy $\bA \bc + \bm{e} = \bu $. Assuming an upper estimate of the tail $\|\bm{e}\|_{\cV,2} \le \eta /\sqrt{m}$ is known, one has $\|\bA \bc - \bu\|_{\cV,2} \le \eta /\sqrt{m}$, which motivates us to find the approximation $\bc^{\#} = (\bc_\bmnu^{\#})_{\bmnu \in \cJ}$ among all $\bz\in \cV^N$ such that $\|\bA \bz - \bu\|_{\cV,2} \le \eta /\sqrt{m}$. 
Having the $\ell_1$ minimization approach for real and complex sparse recovery in mind, it is natural to consider the following modification of the BPDN problem as:
\begin{align}
\label{eq:SCS_BPDN}
\text{minimize}_{\bz\in \cV^N} \; \|\bz\|_{\cV,1}\quad\text{ subject to }\quad\|\bA \bz - \bu\|_{\cV,2} \le \eta /\sqrt{m}.
\end{align}  
Here, the Hilbert-valued vector is minimized with respect to an $\ell_1$-norm defined as the sum of the magnitude of coordinates in the energy norm. 
We will sometimes refer to problem \eqref{eq:SCS_BPDN} as the SCS-BPDN problem.

\begin{remark}[Relaxation of the constrained minimization problem \eqref{eq:SCS_BPDN}]
\label{rem:SCS_l_V1}
We note that the constrained SCS-BPDN problem \eqref{eq:SCS_BPDN} can be solved by the related unconstrained convex minimization problem
\begin{align}
\label{eq:SCS_l_V1}
\textnormal{minimize}_{\bz \in \cV^N} \|\bz\|_{\cV,1} + \frac{{\mu}}{2} \|\bA \bz - \bu \|_{\cV,2}^2.
\end{align}
This problem provides solutions to problem \eqref{eq:SCS_BPDN} when the penalty parameter ${\mu}$ is chosen accordingly to the bound on the residual.
Many existing algorithms for single sparse recovery through standard BPDN can be modified in a straightforward manner to obtain solutions to \eqref{eq:SCS_BPDN} and  \eqref{eq:SCS_l_V1}, e.g., 
forward-backward splitting \cite{DexterTranWebster2018,HYZ08,HYZ10} and Bregman iterations \cite{OBGXY05}, as will be discussed in Section \ref{sec:algorithms}.
\end{remark}

\begin{remark}[Estimation of truncation errors]
\label{rem:tail_est}
In general, an estimate ${\eta}$ of the expansion tail is required in order to obtain accurate solutions to \eqref{eq:SCS_BPDN}. As aforementioned, we assume $\eta$ to be an upper bound of $\(\sum_{i=1}^m \|u_{\cJ^c}(\by_i)\|^2_{\cV}\)^{1/2}$ for theoretical uniform recovery. Note that
$$
\mathbb{E}\(\frac 1m\sum_{i=1}^m \|u_{\cJ^c}(\by_i)\|^2_{\cV}\) = \left\| u - u_\cJ \right\|_{L^2_\varrho(\cU;\cV)}^2 ,  
$$
$\eta$ is often considered interchangeably as an adequate estimate of $\sqrt{m}\left\| u - u_\cJ \right\|_{L^2_\varrho(\cU;\cV)}$, particularly in numerical tests. Representing 
$
\dfrac{\eta}{\sqrt{m}} = C_{\eta} \left\| u - u_\cJ \right\|_{L^2_\varrho(\cU;\cV)}, 
$ we can reasonably assume that 
$
C_{\eta}
$ 
is $\mathcal{O}(1)$. We remark that it is possible to acquire some uniform recovery when truncation errors are unknown a priori, see \cite{BrugiapagliaAdcock18,ABB17}.  
\end{remark}

\begin{remark}[SCS in the context of finite element discretizations]
Our previous work \cite{DexterTranWebster2018} described the connection between infinite dimensional joint-sparse recovery and SCS for solving \eqref{eq:genPDE} through problem \eqref{eq:SCS_l_V1}.
Indeed, these problems are equivalent in the sense that solutions $\bc$ obtained from SCS through \eqref{eq:SCS_l_V1} (given exact samples $\bu = (u(\by_i))_{i=1}^m\in \cV^m$ solving \eqref{eq:genPDE}) can be identified with matrices $\hat{\bc}$ solving the $\ell_{2,1}$-regularized joint-sparse convex minimization problem (with the same data, expanded in a countable orthonormal basis of $\cV$).

On the other hand, SCS is relevant to many popular methods of discretization, including the finite element, volume, and difference methods.
We briefly describe solution of \eqref{eq:SCS_BPDN} in the context of finite element approximations.
Let $(\mathcal{T}_h)_{h>0}$, be a family of shape regular triangulations of $D$, parameterized by maximum mesh size $h\to 0$, and let $\cV_h\subset \cV$ be the finite element space of piecewise continuous polynomials on $\mathcal{T}_h$. 
Given a basis $(\varphi_k)_{k=1}^{\Jh}$ for $\cV_h$ with $K_h = \dim(\cV_h)$, we can approximate the Hilbert-valued vector $\bc = (\bc_\bmnu)_{\bmnu\in\cJ}\in\cV^N$ by finite element expansions of its coefficients, defining $\bc^h = (\bc_\bmnu^h)_{\bmnu\in\cJ}$ as 
\begin{align}
\label{eq:c_nu_FEM_expansion}
\bc_{\bmnu}^h := \sum_{k = 1}^{K_h} \hat{c}_{\bmnu,k}^h \varphi_k \in \cV_h  \qquad \forall \bmnu\in\cJ,
\end{align}
where $\hat{c}_{\bmnu,k}^h \in \R$ for $k= 1,\ldots, \Jh$.
As in the infinite dimensional case, we can identify the coefficients of the expansion \eqref{eq:c_nu_FEM_expansion} as rows of a matrix $\hat{\bc}^h \in \R^{N \times \Jh}$. 
However, unlike in that case, we do not have $\|\hat{\bc}^h\|_{2,q} \equiv \|\bc^h\|_{\cV,q}$ for $q\ge 1$, but instead
\begin{align}
\label{eq:2q_Vq_norm_equivalence}
c_{h,n} \|\hat{\bc}^h \|_{2,q} \le \|\bc^h \|_{\cV,q} \le C_{h,n} \|\hat{\bc}^h \|_{2,q},
\end{align}
for some constants $0 < c_{h,n} \le C_{h,n}$ which may depend on $h$ and $n$. 
{Using problem \eqref{eq:model_problem} as an example, if $D\subset \R^n$ is a Lipschitz domain in $n = 2$ physical dimensions, and $(\varphi_k)_{k=1}^{\Jh}$ is a piecewise-linear finite element basis, associated with a quasi-uniform and shape-regular mesh $\mathcal{T}_h$ of $D$, then $c_{h,n}$ is $\mathcal{O}(h^{2})$ and $C_{h,n}$ is $\mathcal{O}(1)$. For more details, see, e.g., \cite{MR1278258}.}
Nonetheless, given the representation \eqref{eq:c_nu_FEM_expansion}, it is easy to compute $\|\bc_\bmnu^h \|_\cV$ in finding solutions to \eqref{eq:SCS_BPDN}. More details on the changes to algorithms required for solving \eqref{eq:SCS_BPDN} via \eqref{eq:SCS_l_V1} with finite element discretizations will be described in Section \ref{sec:algorithms}.
\end{remark}

\section{Framework for SCS recovery of solutions to parameterized PDEs} 
\label{sec:Framework}

In this section, we provide a framework for the approximation of solutions to parameterized PDE problems through the SCS approach outlined in Section \ref{sec:methodology}.
This framework relies on the theoretical coefficient bounds on the solution to the parameterized PDE problems of type \eqref{eq:genPDE}.
With these bounds and {\em a priori} error estimates shown for quasi-optimal polynomial approximations in \cite{TWZ17}, we are able to derive convergence rates for such approximations, relevant to many practical finite-dimensional parametric PDE problems.
The rest of this section is organized as follows. 
Section \ref{subsec:Hilbert_valued_uniform_recovery} discusses uniform recovery results and error estimates for solving the SCS-BPDN problem \eqref{eq:SCS_BPDN} in term of best $s$-term errors and tail bounds.
Section \ref{subsec:Hilbert_valued_error_estimates} combines the quasi-optimal error estimates with the uniform recovery for the SCS-BPDN problem, and establishes the convergence of our approach with respect to the number of samples $m$.

\subsection{Uniform recovery of Hilbert-valued signals}
\label{subsec:Hilbert_valued_uniform_recovery}
Straightforward extensions of concepts and results from compressed sensing and joint-sparse recovery can be made to ensure uniform recovery of Hilbert-valued signals via $\ell_{\cV,1}$-relaxation.
Well-known concepts such as the {\em null space property} (NSP) and {\em restricted isometry property} (RIP) have Hilbert-valued counterparts.
In this section, we present Hilbert-valued versions of the NSP and RIP to guarantee uniform recovery for the SCS-BPDN problem, i.e., in the presence of noise or sparsity defects. 
To our knowledge, these extensions have not yet been provided in the considered setting, but are necessary for establishing recovery guarantees.

In the noiseless case, i.e., $\eta=0$ in  \eqref{eq:SCS_BPDN}, let $[N]$ denote the set $\{1,\ldots,N\}$ and $\bz_S$ be the Hilbert-valued vector consisting of only the elements from $\bz$ indexed by the set $S\subseteq [N]$, i.e., $\bz_j = \bm{0}$ for $j\in S^c$.
Following simple extensions of results from \cite{Lai2011,BergFriedlander10}, it can be shown that every $s$-sparse $\bc\in\cV^N$ can be uniquely recovered from $\bu = \bA \bc$ by solving \eqref{eq:SCS_BPDN} if and only if 
\begin{align*}
\|\bz_S \|_{\cV,1} < \|\bz_{S^c} \|_{\cV,1},
\end{align*}
for all $\bz \neq 0$ with $\bz\in \ker \bA \setminus \{\bm{0}\}$, and all $S\subset [N]$ with $\#(S) = s$.
In the noisy case $\eta > 0$, best $s$-term approximations of vectors $\bc\in \cV^N$ can be guaranteed (up to a constant and noise level) by the $\ell_{\cV,2}$-robust NSP.

\begin{definition}[$\ell_{\cV,2}$-robust null space property]
\label{def:l_V2_RNSP}
The matrix $\bA\in\R^{m\times N}$ is said to satisfy the $\ell_{\cV,2}$-robust null space property of order $s$ with constants $0< \rho < 1$ and $\tau > 0$ if 
\begin{align}
\|\bz_S \|_{\cV,2} \le \frac{\rho}{\sqrt{s}} \|\bz_{S^c}\|_{\cV,1} + \tau \|\bA \bz \|_{\cV,2} \quad \forall \bz\in\cV^N, \forall S\subset[N] \;\; \textnormal{with} \;\; \#(S) \le s.
\end{align}
\end{definition}

Just as in the setting of joint-sparse recovery, replacing the vector norms by $\ell_{\cV,q}$-norms, we can obtain the following error estimate for solving the SCS-BPDN problem.

\begin{proposition}
\label{prop:lpV_convergence}
Let $1 \le q \le 2$. Suppose that the matrix $\bA\in\R^{m\times N}$ satisfies the $\ell_{\cV,2}$-robust NSP of order $s$ with constants $0< \rho < 1$ and $\tau > 0$. Then, for any $\bc\in\cV^N$, the solution $\bc^\#$ to \eqref{eq:SCS_BPDN} with $\bu = \bA\bc + \bm{e}$ and $\|\bm{e}\|_{\cV,2} \le \eta/\sqrt{m}$ approximates $\bc$ with errors given by
\begin{align}
\label{eq:SCS_BPDN_error}
\|\bc - \bc^\# \|_{\cV,q} \le \frac{C_1}{s^{1-1/q}} \sigma_{s}(\bc)_{\cV,1} + C_2 s^{1/q - 1/2} \frac{\eta}{\sqrt{m}},
\end{align}
for some constants $C_1,C_2 > 0$ depending only on $\rho$ and $\tau$. In particular, for $q=1$ and $q=2$, \eqref{eq:SCS_BPDN_error} yields
\begin{align}
\label{eq:BPDN_q=1_error_estimates}
\|\bc - \bc^\# \|_{\cV,1} & \le C_1 \sigma_s(\bc)_{\cV,1} + C_2 \eta\sqrt{\frac sm}, \\
\label{eq:BPDN_q=2_error_estimates}
\|\bc - \bc^\# \|_{\cV,2} & \le \frac{C_1}{\sqrt{s}}  \sigma_s(\bc)_{\cV,1} + C_2 \frac{\eta}{\sqrt{m}}.
\end{align}
\end{proposition}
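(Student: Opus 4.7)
The plan is to adapt the classical Foucart--Rauhut proof of the BPDN error estimate (e.g., \cite{FouRau13}, Theorem 4.22) to the Hilbert-valued setting, exploiting the fact that the mixed norms $\|\cdot\|_{\cV,p}$ behave formally like $\ell_p$-norms of the real sequence $(\|\bz_\bmnu\|_\cV)_{\bmnu\in\cJ}$. The essential observation is that the triangle inequality for $\|\cdot\|_{\cV,1}$, the Cauchy--Schwarz-type bound $\|\bv_S\|_{\cV,1} \le \sqrt{s}\,\|\bv_S\|_{\cV,2}$ for $\#(S)\le s$, and the additivity $\|\bv\|_{\cV,p}^p = \|\bv_S\|_{\cV,p}^p + \|\bv_{S^c}\|_{\cV,p}^p$ all hold identically to the scalar case, since they depend only on the nonnegativity and summation of the component norms $\|\bv_\bmnu\|_\cV$.

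I would first establish the $q=1$ case. Set $\bv := \bc^\# - \bc$ and let $S\subset[N]$ be the index set of the $s$ components of $\bc$ with largest $\cV$-norm, so that $\|\bc_{S^c}\|_{\cV,1} = \sigma_s(\bc)_{\cV,1}$. Optimality of $\bc^\#$ gives $\|\bc^\#\|_{\cV,1} \le \|\bc\|_{\cV,1}$; expanding both sides componentwise and applying the triangle inequality yields the classical ``cone condition''
\begin{equation*}
\|\bv_{S^c}\|_{\cV,1} \le \|\bv_S\|_{\cV,1} + 2\sigma_s(\bc)_{\cV,1}.
\end{equation*}
Feasibility of both $\bc$ and $\bc^\#$ implies $\|\bA\bv\|_{\cV,2} \le 2\eta/\sqrt{m}$. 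Applying the $\ell_{\cV,2}$-robust NSP to $\bv$ and using $\|\bv_S\|_{\cV,1}\le \sqrt{s}\,\|\bv_S\|_{\cV,2}$ produces
\begin{equation*}
\|\bv_S\|_{\cV,1} \le \rho\,\|\bv_{S^c}\|_{\cV,1} + \tau\sqrt{s}\,\|\bA\bv\|_{\cV,2},
\end{equation*}
so combining with the cone condition and using $\rho<1$ bounds $\|\bv_{S^c}\|_{\cV,1}$ by a multiple of $\sigma_s(\bc)_{\cV,1}$ plus $\sqrt{s}\,\eta/\sqrt{m}$; summing with $\|\bv_S\|_{\cV,1}$ yields \eqref{eq:BPDN_q=1_error_estimates}.

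For $q=2$, I would apply the NSP once more to obtain $\|\bv_S\|_{\cV,2} \le (\rho/\sqrt{s})\|\bv_{S^c}\|_{\cV,1} + \tau\|\bA\bv\|_{\cV,2}$, then control $\|\bv_{S^c}\|_{\cV,2}$ by a standard ``sorting'' argument: since the entries of $\bv_{S^c}$ (ordered by decreasing $\cV$-norm) are each no larger than the average of a prefix of $\bv$'s sorted tail, one gets $\|\bv_{S^c}\|_{\cV,2} \le s^{-1/2}\|\bv_{S^c}\|_{\cV,1}$ up to a $\sigma_s$-correction. Summing the two pieces and substituting the $q=1$ bound \eqref{eq:BPDN_q=1_error_estimates} for $\|\bv_{S^c}\|_{\cV,1}$ gives \eqref{eq:BPDN_q=2_error_estimates}. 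The intermediate range $1<q<2$ then follows by the interpolation inequality $\|\bv\|_{\cV,q} \le \|\bv\|_{\cV,1}^{2/q-1}\|\bv\|_{\cV,2}^{2-2/q}$ (an immediate consequence of H\"older applied to the scalar sequence $(\|\bv_\bmnu\|_\cV)$), combined with the two endpoint estimates.

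The main obstacle is conceptual rather than computational: one must verify that every inequality in the scalar proof has a faithful Hilbert-valued analogue. The delicate points are the cone condition derivation (where one needs $\|\bc_S + \bv_S\|_{\cV,1} \ge \|\bc_S\|_{\cV,1} - \|\bv_S\|_{\cV,1}$ componentwise, which reduces to reverse triangle in $\cV$), and the sorting argument for the tail in the $q=2$ step, which requires that the sorted rearrangement of $(\|\bv_\bmnu\|_\cV)$ is a legitimate scalar sequence to which standard Stechkin-type inequalities apply. Once these are confirmed, the constants $C_1,C_2$ depend only on $\rho,\tau$ exactly as in the scalar case, and the proof proceeds without further complication.
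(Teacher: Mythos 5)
Your proof is correct and matches the paper's intended route: the paper states Proposition \ref{prop:lpV_convergence} without proof, asserting it follows by replacing the scalar $\ell_p$-norms in the standard Foucart--Rauhut BPDN argument with the mixed $\|\cdot\|_{\cV,p}$-norms, which is exactly the adaptation you carry out (cone condition from minimality, feasibility bound $\|\bA\bv\|_{\cV,2}\le 2\eta/\sqrt{m}$, robust NSP, and interpolation for $1<q<2$). The one step to make precise is the Stechkin bound in the $q=2$ case, where the inequality $\|\bv_{S^c}\|_{\cV,2}\le s^{-1/2}\|\bv_{S^c}\|_{\cV,1}$ requires $S$ to contain the $s$ largest components of $\bv$ rather than of $\bc$; the standard fix is to run that step with the index set of the largest entries of $\bv$ and then invoke the $q=1$ bound on $\|\bv\|_{\cV,1}$, as you already anticipate with your ``$\sigma_s$-correction'' remark.
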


An RIP-type condition is required to quantify the sample complexity of solving \eqref{eq:SCS_BPDN} to a prescribed accuracy.
The following result establishes the implication of the $\ell_{\cV,2}$-robust NSP from the standard RIP, relevant for SCS in tensor products of separable Hilbert spaces $\cV$.

\begin{proposition}
\label{prop:RIP_V_RIP_equivalence}
Suppose that $\cV$ is a separable Hilbert space, and that the matrix $\bA\in \R^{m\times N}$ satisfies the RIP, that is
\begin{align}
\label{eq:standard_RIP}
(1 - \delta_{2s}) \|\bz \|_2^2 \le \|\bA \bz \|_2^2 \le (1+ \delta_{2s}) \|\bz \|_2^2, \quad \forall \bz\in\R^N, \;\; \bz \;\; 2s\textnormal{-sparse},
\end{align}
with $\delta_{2s} < \frac{4}{\sqrt{41}}$. Then, $\bA$ satisfies the $\ell_{\cV,2}$-robust NSP of order $s$ with constants $0< \rho < 1$ and $\tau>0$ depending only on $\delta_{2s}$.
\end{proposition}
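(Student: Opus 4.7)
The plan is to reduce the Hilbert-valued statement to the scalar case by expanding each coordinate of $\bz\in\cV^N$ in an orthonormal basis of $\cV$. First I would invoke the classical result (see, e.g., Theorem 6.13 of Foucart--Rauhut) that the standard RIP with $\delta_{2s}<4/\sqrt{41}$ implies the scalar $\ell_2$-robust null space property of order $s$, i.e., there exist $\rho\in(0,1)$ and $\tau>0$ depending only on $\delta_{2s}$ such that for every $\bx\in\R^N$ and every $S\subset[N]$ with $\#(S)\le s$,
\begin{equation*}
\|\bx_S\|_2 \le \frac{\rho}{\sqrt{s}}\|\bx_{S^c}\|_1 + \tau\|\bA\bx\|_2.
\end{equation*}
The remainder of the proof promotes this scalar NSP to its $\ell_{\cV,2}$-analogue.

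Since $\cV$ is separable, fix a countable orthonormal basis $(\phi_j)_{j\in\N}$ of $\cV$ and write $\bz_\bmnu=\sum_j z_{\bmnu,j}\phi_j$ for each $\bmnu\in[N]$, so that $\|\bz_\bmnu\|_{\cV}^2=\sum_j |z_{\bmnu,j}|^2$. For each fixed $j$ assemble the real scalar vector $\bz^{(j)}:=(z_{\bmnu,j})_{\bmnu\in[N]}\in\R^N$. By linearity of $\bA$ in the coordinate direction, $(\bA\bz)_i=\sum_j(\bA\bz^{(j)})_i\,\phi_j$, and hence
\begin{equation*}
\|\bz_S\|_{\cV,2}^2 = \sum_{j}\|\bz^{(j)}_{S}\|_2^2,\qquad \|\bA\bz\|_{\cV,2}^2 = \sum_{j}\|\bA\bz^{(j)}\|_2^2.
\end{equation*}
Applying the scalar $\ell_2$-robust NSP to each $\bz^{(j)}$, taking the $\ell_2$ norm in $j$, and using Minkowski's inequality coordinate-by-coordinate gives
\begin{equation*}
\|\bz_S\|_{\cV,2} = \Big(\sum_j \|\bz^{(j)}_S\|_2^2\Big)^{1/2} \le \frac{\rho}{\sqrt{s}}\Big(\sum_j \|\bz^{(j)}_{S^c}\|_1^2\Big)^{1/2} + \tau \Big(\sum_j \|\bA\bz^{(j)}\|_2^2\Big)^{1/2}.
\end{equation*}
The second term on the right is exactly $\tau\|\bA\bz\|_{\cV,2}$, so it only remains to bound the first term by $\|\bz_{S^c}\|_{\cV,1}$.

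The final step is the Minkowski-type inequality
\begin{equation*}
\Big(\sum_j \Big(\sum_{\bmnu\in S^c}|z_{\bmnu,j}|\Big)^2\Big)^{1/2} \le \sum_{\bmnu\in S^c}\Big(\sum_j |z_{\bmnu,j}|^2\Big)^{1/2} = \sum_{\bmnu\in S^c}\|\bz_\bmnu\|_{\cV} = \|\bz_{S^c}\|_{\cV,1},
\end{equation*}
which is simply the triangle inequality in the sequence space $\ell_2(\N)$ applied to the nonnegative sequences $(|z_{\bmnu,j}|)_{j\in\N}$, indexed by $\bmnu\in S^c$. Combining the last two displays yields the claimed $\ell_{\cV,2}$-robust NSP with the same constants $\rho$ and $\tau$. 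The only subtlety I anticipate is the bookkeeping when $\cV$ is infinite-dimensional: one must check that all interchanges of sums are legitimate, but this is immediate because $N$ and $\#(S^c)$ are finite and each $\bz_\bmnu$ lies in $\cV$, so every series in $j$ above converges absolutely, and Fubini/Tonelli justifies the swaps. No issue arises in identifying $\bA\bz$ componentwise either, since $\bA$ has real entries and acts on $\cV^N$ by the obvious extension $(\bA\bz)_i=\sum_{\bmnu}\bA_{i,\bmnu}\bz_\bmnu$.
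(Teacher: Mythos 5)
Your proof is correct, but it routes through the implication in a different order than the paper does. The paper lifts the \emph{hypothesis}: it first shows that the scalar RIP \eqref{eq:standard_RIP} is equivalent to a Hilbert-valued RIP on $2s$-sparse elements of $\cV^N$ (by exactly the slicing $\bz_\bmnu = \sum_r \hat{\bz}_{\bmnu,r}\phi_r$ you use, applied to the RIP inequality and summed over $r$), and then asserts that the $\ell_{\cV,2}$-robust NSP follows from this $\cV$-valued RIP ``by arguments from vector recovery proofs,'' i.e., by rerunning the Foucart--Rauhut Theorem 6.13 argument with $\|\cdot\|_{\cV}$ in place of $|\cdot|$. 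You instead lift the \emph{conclusion}: you invoke the scalar RIP $\Rightarrow$ NSP implication entirely as a black box, apply the scalar robust NSP to each slice $\bz^{(j)}$, and recombine with two applications of Minkowski's inequality (once to separate the two right-hand terms after taking the $\ell_2$ norm in $j$, and once to dominate $\bigl(\sum_j \|\bz^{(j)}_{S^c}\|_1^2\bigr)^{1/2}$ by $\|\bz_{S^c}\|_{\cV,1}$). Your version has the advantage of being fully self-contained modulo the classical scalar theorem --- it avoids having to certify that the RIP-to-NSP machinery survives the passage to Hilbert-valued vectors, which the paper leaves to the reader --- and it makes transparent that the constants $\rho,\tau$ are unchanged from the scalar case. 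What the paper's ordering buys is the intermediate $\ell_{\cV,2}$-RIP statement \eqref{eq:V_RIP} itself (stated there as an equivalence, with the converse direction proved via rank-one test vectors $\bz_\bmnu = \hat z_\bmnu \phi_r$), which is of independent use. Your convergence bookkeeping for infinite-dimensional $\cV$ is handled correctly since $N$ is finite and each coordinate lies in $\cV$.
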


\begin{proof}
We show \eqref{eq:standard_RIP} is equivalent to the following $\ell_{\cV,2}$-version of the RIP, i.e., 
\begin{align}
\label{eq:V_RIP}
(1 - \delta_{2s}) \|\bz \|_{\cV,2}^2 \le \|\bA \bz \|_{\cV,2}^2 \le (1+ \delta_{2s}) \|\bz \|_{\cV,2}^2, \quad \forall \bz\in\cV^N, \;\; \bz \;\; 2s\textnormal{-sparse}.
\end{align}
The implication of the $\ell_{\cV,2}$-robust NSP in Definition \ref{def:l_V2_RNSP} then easily follows by arguments from vector recovery proofs, see, e.g., \cite[Theorem 6.13]{FouRau13}.

First, assuming \eqref{eq:standard_RIP}, let $\bz\in \cV^N$ be a $2s$-sparse Hilbert-valued vector and $(\phi_r)_{r\in\N}$ be an orthonormal basis of $\cV$.
Then for each $\bmnu\in\cJ$, $\bz_\bmnu\in\cV$ can be uniquely represented as
\begin{align*}
\bz_\bmnu = \sum_{r\in\N} \hat{\bz}_{\bmnu,r} \phi_r. 
\end{align*}
Let $\hat{\bz}\in(\ell_2(\R))^N$ be given by $\hat{\bz}_\bmnu = (\hat{\bz}_{\bmnu,r})_{r\in\N}$, $\forall \bmnu \in \cJ$ and define the $\ell_{p,q}$-norm by $\|\hat{\bz}\|_{p,q}^q = \sum_{\bmnu\in\cJ} \|\hat{\bz}_{\bmnu}\|_p^q$.
For each $r\in\N$, the vector $\hat{\bz}^{(r)} := (\hat{\bz}_{\bmnu,r})_{\bmnu\in\cJ} \in \R^N$ is $2s$-sparse, implying 
$
(1- \delta_{2s})\|\hat{\bz}^{(r)}\|_{2}^2 \le \|\bA \hat{\bz}^{(r)} \|_{2}^2 \le (1 + \delta_{2s}) \|\hat{\bz}^{(r)}\|_2^2.
$
Summing over $r\in\N$, we obtain
\begin{align*}
(1- \delta_{2s})\|\hat{\bz}\|_{2,2}^2 \le \|\bA \hat{\bz} \|_{2,2}^2 \le (1 + \delta_{2s}) \|\hat{\bz}\|_{2,2}^2.
\end{align*}
Since $\|\bz\|_{\cV,2} = \|\hat{\bz}\|_{2,2}$ and $\|\bA\bz\|_{\cV,2} = \|\bA \hat{\bz}\|_{2,2}$, see \cite[Section 1.1]{DexterTranWebster2018}, the first direction of the result follows.

On the other hand, assuming \eqref{eq:V_RIP}, let $\hat{z}$ be a $2s$-sparse vector in $\R^N$. Fix an $r\in \N$, one can easily verify that the Hilbert valued vector $\bz\in\cV^N$ defined by $\bz_\bmnu := \hat{z}_\bmnu \phi_r$ $\forall \bmnu\in\cJ$ satisfies $\|\bz \|_{\cV,2} = \|\hat{z}\|_2$ and $\|\bA\bz\|_{\cV,2} = \|\bA\hat{z}\|_2$. Since $\bz$ is $2s$-sparse, the result follows.
\end{proof}

Proposition \ref{prop:RIP_V_RIP_equivalence} implies that sample complexity results shown for solving the single-sparse BPDN problem
hold for the SCS-BPDN problem \eqref{eq:SCS_BPDN} as well. 
Therefore, following our previous work \cite{ChkifaDexterTranWebster18}, with 
$\Theta = \sup_{\bmnu\in\cJ} \|\Psi_\bmnu\|_{L^\infty(\cU)}$, 
given $m\asymp \Theta^2 s\log^2(s)\log(N)$, SCS reconstruction through problem \eqref{eq:SCS_BPDN} achieves an error of $\|\bc - \bc^\# \|_{\cV,2} \lesssim \sigma_s(\bc)_{\cV,1}/\sqrt{s} + \eta/\sqrt{m}$
 with probability $1-N^{-\log(s)}$.

\subsection{Error estimates for Hilbert-valued recovery}
\label{subsec:Hilbert_valued_error_estimates}

With Propositions \ref{prop:lpV_convergence} - \ref{prop:RIP_V_RIP_equivalence} and error estimates as in \cite[Theorem 2]{TWZ17} for quasi-optimal approximations, we are now able to provide convergence rates for approximations to parameterized PDEs obtained through the SCS recovery techniques of Section \ref{sec:methodology}. 
The next theorem provides a benchmark for performance of sparse Hilbert-valued reconstruction in the general setting of solutions to problem \eqref{eq:genPDE}, under the condition that the solution $u$ has parametric expansion with coefficients $(\bc_{\bmnu})_{\bmnu\in\cF}$ as in \eqref{eq:expansionPC} satisfying $\|\bc_{\bmnu}\|_\cV \lesssim e^{-b(\bmnu)}$ for every $\bmnu\in\cF$, with $b(\bmnu)$ obeying Assumption 3 from \cite{TWZ17}. For brevity, we do not detail that assumption herein, but remark that it is weak and satisfied by all existing coefficient estimates we are aware of.

\begin{theorem}
\label{thm:SCS_convergence}
Let $\varepsilon > 0$ be arbitrary and assume that the solution $u$ to \eqref{eq:genPDE} with parametric expansion \eqref{eq:expansionPC} has coefficients $(\bc_{\bmnu})_{\bmnu\in\cF}$ satisfying $\|\bc_\bmnu\|_\cV \le e^{-b(\bmnu)}$ for every $\bmnu\in\cF$ with $b:[0,\infty)^d \to \R$ also satisfying \cite[Assumption 3]{TWZ17}.
Denote by $\cJ_s$ the set of indices corresponding to the $s$ largest bounds of the sequence $(e^{-b(\bmnu)})_{\bmnu\in\cF}$ with $s$ large enough such that \eqref{eq:quasi_optimal_rate} holds.
Let $\cJ$ be such that $\cJ_s \subseteq \cJ$, and assume that the number of samples $m\in\N$ satisfies
\begin{align}
\label{eq:RIP_complexity}
m\ge C \Theta^2 s \max \{\log^2(\Theta^2 s) \log(N), \log(\Theta^2 s) \log(\log(\Theta^2 s) N^{\log(s)}) \},
\end{align}
with $\Theta = \sup_{\bmnu\in\cJ} \|\Psi_\bmnu\|_{L^\infty(\cU)}$ and $N= \#(\cJ)$. 
Then with probability $1-N^{-\log(s)}$, the solution $\bc^\#$ of 
\begin{align}
    \label{eq:SCS_BPDN_repeat}
\textnormal{minimize}_{\bz\in\mcV^N} \;\; \|\bz\|_{\mcV,1} \quad \textnormal{subject to} \quad \|\bA \bz - \bmu \|_{\mcV,2} \le \frac{\eta}{\sqrt{m}}
\end{align}
approximates $u$ with error
\begin{align}
\label{eq:SCS_convergence}
\left\| u - \sum_{\bmnu \in \cJ} \bc_\bmnu^\# \Psi_\bmnu \right\|_{L^2_\varrho(\cU;\cV)} \le \tilde{C}_\varepsilon \sqrt{s} \exp \left[ - \left( \frac{\kappa \, s}{(1 + \varepsilon)} \right)^{1/d} \right],
\end{align}
where $\kappa,\tilde{C}_\varepsilon >0$ are independent of $s$.
\end{theorem}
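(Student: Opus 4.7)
The plan is to combine the uniform recovery machinery assembled in Section \ref{subsec:Hilbert_valued_uniform_recovery} with the quasi-optimal approximation estimate \eqref{eq:quasi_optimal_rate} from \cite[Theorem 2]{TWZ17}. First, the sample complexity \eqref{eq:RIP_complexity} is exactly the condition under which standard RIP results for bounded orthonormal systems (as in our previous work \cite{ChkifaDexterTranWebster18}) guarantee that, with probability at least $1 - N^{-\log(s)}$, the matrix $\bA$ defined in \eqref{eq:defA} satisfies the RIP \eqref{eq:standard_RIP} of order $2s$ with $\delta_{2s} < 4/\sqrt{41}$. Proposition \ref{prop:RIP_V_RIP_equivalence} then promotes this to the $\ell_{\cV,2}$-robust NSP of order $s$, and the $q=2$ case of Proposition \ref{prop:lpV_convergence} delivers
\begin{equation*}
\|\bc - \bc^\#\|_{\cV,2} \;\le\; \frac{C_1}{\sqrt{s}}\,\sigma_s(\bc)_{\cV,1} \;+\; C_2\,\frac{\eta}{\sqrt{m}}.
\end{equation*}

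Next, I split the target error using Parseval in the orthonormal basis $(\Psi_\bmnu)_{\bmnu\in\cF}$:
\begin{equation*}
\left\| u - \sum_{\bmnu\in\cJ} \bc^\#_\bmnu \Psi_\bmnu \right\|_{L^2_\varrho(\cU;\cV)}^2 \;=\; \sum_{\bmnu\notin\cJ} \|\bc_\bmnu\|_\cV^2 \;+\; \|\bc - \bc^\#\|_{\cV,2}^2,
\end{equation*}
so the triangle inequality reduces the proof to bounding three quantities by the right-hand side of \eqref{eq:SCS_convergence}. Because $\cJ_s \subseteq \cJ$, the tail $\sum_{\bmnu\notin\cJ} \|\bc_\bmnu\|_\cV^2$ is dominated by $\sum_{\bmnu\notin\cJ_s} e^{-2 b(\bmnu)}$, which is exactly what \cite[Theorem 2]{TWZ17} bounds by $C_\varepsilon^2\, s\,\exp[-2(\kappa s/(1+\varepsilon))^{1/d}]$. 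By Remark \ref{rem:tail_est}, $\eta/\sqrt{m}$ is $\mathcal{O}(\|u - u_\cJ\|_{L^2_\varrho(\cU;\cV)})$ and therefore obeys the same bound. It remains to control $\sigma_s(\bc)_{\cV,1}/\sqrt{s}$, for which I use $\sigma_s(\bc)_{\cV,1} \le \sum_{\bmnu \notin \cJ_s} \|\bc_\bmnu\|_\cV \le \sum_{\bmnu \notin \cJ_s} e^{-b(\bmnu)}$.

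The main obstacle is the last step: showing that the $\ell^1$-tail of the quasi-optimal coefficient envelope admits the bound $\sum_{\bmnu \notin \cJ_s} e^{-b(\bmnu)} \lesssim s\,\exp[-(\kappa s/(1+\varepsilon))^{1/d}]$, so that dividing by $\sqrt{s}$ yields the target rate $\sqrt{s}\,\exp[-(\kappa s/(1+\varepsilon))^{1/d}]$. A naive Cauchy--Schwarz step would introduce a factor $\sqrt{\#(\cJ_s^c)}$ that destroys the dimension-independent decay, so I instead rerun the layer-cake argument of \cite[Theorem 2]{TWZ17}: partition $\cJ_s^c$ into shells $\{\bmnu : t \le b(\bmnu) < t+1\}$, control the cardinality of each shell via the polytope-volume estimate supplied by Assumption 3 of \cite{TWZ17}, and sum the geometric tail. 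This loses exactly one additional factor of $\sqrt{s}$ compared to the $\ell^2$-tail, absorbed into a new constant $\tilde{C}_\varepsilon$. Combining the three bounds with the same exponent $(\kappa s/(1+\varepsilon))^{1/d}$ yields \eqref{eq:SCS_convergence}.
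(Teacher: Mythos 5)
Your proposal is correct and follows essentially the same route as the paper's proof: RIP for bounded orthonormal systems promoted to the $\ell_{\cV,2}$-robust NSP via Proposition \ref{prop:RIP_V_RIP_equivalence}, the $q=2$ estimate of Proposition \ref{prop:lpV_convergence}, Parseval to pass between $L^2_\varrho(\cU;\cV)$ and $\ell_{\cV,2}$, and then bounding $\sigma_s(\bc)_{\cV,1}$ by the $\ell^1$-tail $\sum_{\bmnu\in\cJ_s^c}e^{-b(\bmnu)}$ and the truncation term by the $\ell^2$-tail, each controlled by \cite[Theorem 2]{TWZ17} (the paper handles the $\ell^1$-tail by applying that theorem to the sequences $e^{-b(\bmnu)}$ and $e^{-2b(\bmnu)}$ with polytope volumes $\kappa_1$ and $\kappa_2 = 2^d\kappa_1$, which is exactly the layer-cake rerun you describe). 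The only cosmetic difference is that you invoke Parseval before splitting while the paper applies the triangle inequality first; the resulting bounds and constants are the same.
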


\begin{proof}
The requirement on $m$ in \eqref{eq:RIP_complexity} ensures the $\ell_{\cV,2}$-RIP and hence $\ell_{\cV,2}$-robust NSP holds for the normalized sampling matrix $\bA$ as a consequence of Proposition \ref{prop:RIP_V_RIP_equivalence}. Hence, in the considered setting, recovery up to the best $s$ term and truncation errors, as in estimate \eqref{eq:SCS_BPDN_error}, occurs with probability $1-N^{-\log(s)}$ when solving problem \eqref{eq:SCS_BPDN_repeat}, see \cite[Theorem 2.2]{ChkifaDexterTranWebster18}. 
All that remains is to combine the error estimate \eqref{eq:quasi_optimal_rate} for the quasi-optimal approximation $u_{\cJ_s}$ with the estimate for the Hilbert-valued BPDN problem under the $\ell_{\cV,2}$-RIP.
Let $u_\cJ = \sum_{\bmnu\in\cJ} \bc_\bmnu \Psi_\bmnu$ as in \eqref{eq:Galerkin_approx} and $u_\cJ^\# = \sum_{\bmnu\in\cJ} \bc_\bmnu^\# \Psi_\bmnu$ be the approximation obtained after solving \eqref{eq:SCS_BPDN_repeat}. Then 
\begin{align*}
\left\| u - u_\cJ^\# \right\|_{L^2_\varrho(\cU;\cV)} 
 \le \left\| u - u_\cJ \right\|_{L^2_\varrho(\cU;\cV)} + \left\| u_\cJ - u_\cJ^\# \right\|_{L^2_\varrho(\cU;\cV)}.
\end{align*}
From Remark \ref{rem:tail_est}, we have
$
\left\| u - u_\cJ \right\|_{L^2_\varrho(\cU;\cV)} 
 = \dfrac{\eta}{C_{\eta} \sqrt{m}}.
$
Also, by Parseval's identity,
\begin{align*}
\left\| u_\cJ - u_\cJ^\# \right\|_{L^2_\varrho(\cU;\cV)} 
= \left(\sum_{\bmnu\in\cJ} \left\| \bc_\bmnu - \bc_\bmnu^\# \right\|_{\cV}^2 \right)^{1/2} = \left\| \bc_\cJ - \bc_\cJ^\# \right\|_{\cV,2},
\end{align*}
where $\bc_\cJ = (\bc_\bmnu)_{\bmnu\in\cJ}$ and $\bc_\cJ^\# = (\bc_\bmnu^\#)_{\bmnu\in\cJ}$. Under \eqref{eq:RIP_complexity}, there follows
\begin{align*}
\left\| \bc_\cJ - \bc_\cJ^\# \right\|_{\cV,2} \le \frac{C_1}{\sqrt{s}} \sigma_s (\bc_\cJ)_{\cV,1} + \frac{C_2}{\sqrt{m}} \eta,
\end{align*}
where $C_1,C_2>0$ are the constants from Proposition \ref{prop:lpV_convergence}.
Grouping terms, we see that $u_{\cJ}^\#$ satisfies
\allowdisplaybreaks
\begin{align*}
& \left\| u - u_\cJ^\# \right\|_{L^2_\varrho(\cU;\cV)} \le \frac{C_1}{\sqrt{s}} \sigma_s (\bc_\cJ)_{\cV,1} + \left( \frac{1}{C_{\eta}} + {C_2} \right) \frac{\eta}{\sqrt{m}} \\
     \le\, & \frac{C_1}{\sqrt{s}} \left( \sum_{\bmnu\in\cJ\setminus\cJ_s} \|\bc_\bmnu\|_\cV \right) + \left( 1 + C_{\eta}{C_2} \right) \left( \, \sum_{\bmnu\in \cJ^c} \|\bc_\bmnu\|_\cV^2 \right)^{1/2} \\
    \le\, & \frac{C_1}{\sqrt{s}} \left( \sum_{\bmnu\in\cJ_s^c} e^{-b(\bmnu)} \right) + \left(1 + C_{\eta}{C_2} \right) \left( \sum_{\bmnu\in\cJ_s^c} e^{-2b(\bmnu)} \right)^{1/2} \\
     \le\,  & \frac{C_1}{\sqrt{s}} C_\varepsilon s \exp \left[ - \left( \frac{\kappa_1 s}{(1+\varepsilon)} \right)^{1/d} \right] 
 + \left(1+ C_{\eta} {C_2} \right) \sqrt{C_\varepsilon s} \left( \exp\left[ - \left(\frac{\kappa_2 s}{(1+\varepsilon)}\right)^{1/d} \right] \right)^{1/2},
\end{align*}
where $\kappa_1$ and $\kappa_2$ are the volumes of the limiting polytopes $\mathcal{Q}_1$ and $\mathcal{Q}_2$ associated with the sequences $(e^{-b(\bmnu)})_{\bmnu\in\cF}$ and $(e^{-2b(\bmnu)})_{\bmnu\in\cF}$ (for detailed definition, see \cite[Theorem 2]{TWZ17}). Observe $\kappa_2= 2^d \kappa_1$,
after substituting $\kappa_2$ and simplifying, we see that $u_\cJ^\#$ satisfies 
\begin{align*}
\left\| u - u_\cJ^\# \right\|_{L^2_\varrho(\cU;\cV)} 
    & \le \left( C_1C_\varepsilon + (1 + C_{\eta} {C_2})\sqrt{C_\varepsilon} \right) \sqrt{s} \exp \left[ - \left( \frac{ \kappa_1 \, s}{(1 + \varepsilon)} \right)^{1/d} \right],
\end{align*}
and therefore \eqref{eq:SCS_convergence} holds with $\tilde{C}_\varepsilon := \left( C_1C_\varepsilon + (1 + C_{\eta} {C_2})\sqrt{C_\varepsilon} \right) >0$ and $\kappa = \kappa_1$ when $m$ satisfies  \eqref{eq:RIP_complexity}. 
\end{proof}

\begin{remark}[Constants in Theorem \ref{thm:SCS_convergence} in the case of the orthonormal Legendre system and the parameterized elliptic PDE \eqref{eq:model_problem}] 
Theorem 2 from \cite{TWZ17} relates the constant $\kappa$ in the above result to the size and shape of the quasi-optimal index set $\cJ_s$, and gives the constant $C_\varepsilon = (4e+4\varepsilon e - 2) \frac{e}{e-1}$.
For many examples of $b(\bmnu)$ one can also derive explicit $\kappa$ and demonstrate optimality of the above result, see, e.g., \cite[Propositions 3-6]{TWZ17} for the results in the cases of the Taylor and Legendre series. We discuss this derivation for the orthonormal Legendre system $(L_\bmnu)_{\bmnu\in\cF}$, relevant to the sparse polynomial approximation methods of Section \ref{sec:methodology} and the numerical results in Section \ref{sec:Hilbert_valued_numerical_results}. 
Under Assumptions \ref{ass:A1} \& \ref{ass:A2}, one can show that the coefficients $(\bc_\bmnu)_{\bmnu\in\cF}$ of \eqref{eq:expansionPC} satisfy 
\begin{align}
\label{eq:Legendre_coeff_bounds}
\|\bc_\bmnu\|_{\cV} \le C\bm{\gamma}^{-\bmnu} \prod_{i=1}^d \sqrt{2\nu_i + 1} \qquad  \bmnu\in\cF,
\end{align}
where $\bm{\gamma}$ with $\gamma_{i} > 1$ for all $1\le i \le d$ is a multi-index parameterizing the largest complex polyellipse containing $\cU$ in $\C^d$ on which $a^*(\cdot,\bz)$ is uniformly elliptic, see, e.g., \cite[Proposition 2]{TWZ17}. 
Setting $\lambda_i  =\log(\gamma_i)$ $\forall 1\le i \le d$, $\kappa$ can be computed explicitly as $\kappa = d! \prod_{i=1}^d \lambda_i$, see \cite[Proposition 5]{TWZ17}, and \eqref{eq:SCS_convergence} becomes 
\begin{align}
\label{eq:SCS_Legendre_convergence}
\left\|u - \sum_{\bmnu\in\cJ} \bc_{\bmnu}^\# L_\bmnu \right\|_{L^2_\varrho(\cU;\cV)} \le  \widehat{C}_{\varepsilon} \, \sqrt{s} \, \exp\left[ - \left( \frac{d! \, s \,\prod_{i=1}^d \lambda_i}{(1+ \varepsilon)} \right)^{1/d} \right],
\end{align}
where $\widehat{C}_{\varepsilon} = \left(C_1 C_\varepsilon + (1 + C_{\eta} C_2)\sqrt{C_\varepsilon} \right) C$ and $C$ is the constant from \eqref{eq:Legendre_coeff_bounds}.
\end{remark}

\section{Algorithms for simultaneous compressed sensing}
\label{sec:algorithms}

In this section, we briefly present details of the algorithms used to solve the SCS-BPDN convex minimization problem \eqref{eq:SCS_BPDN} in the context of the numerical experiments of Section \ref{sec:Hilbert_valued_numerical_results}. 
As discussed in Remark \ref{rem:SCS_l_V1}, the constrained SCS-BPDN problem can be solved through the unconstrained $\ell_{\cV,1}$-regularized convex minimization problem \eqref{eq:SCS_l_V1} when $\mu$ is chosen appropriately. 
Therefore, we present modifications to the well-known Bregman iterative regularization \cite{Bregman1967,YOGD08} with forward-backward iterations \cite{Bauschke2010,Combettes2005,Daubechies2004}, which have been shown to be an efficient combination for solving the single-sparse version of \eqref{eq:SCS_l_V1}. 
We also apply a modification of the fixed-point continuation strategy from \cite{HYZ08} for improving the convergence of the forward-backward iterations, denoting the combined solver as the Bregman-FPC algorithm.
We describe our approach as follows.

Since the objective function of \eqref{eq:SCS_l_V1} is composed of both convex differentiable and convex non-differentiable parts, we apply the forward-backward iterations 
\begin{align}
\label{eq:SCS_G_tau}
G_\tau(\bx) & = \bx - \tau \bA^*(\bA\bx - \bu), \\
\label{eq:SCS_J_upsilon}
J_{\upsilon,\bmnu}(\bx) & = J_\upsilon(\bx_\bmnu) = \frac{\bx_{\bmnu}}{\|\bx_{\bmnu}\|_\cV} \cdot \max \{ \|\bx_{\bmnu} \|_\cV - \upsilon, 0\}, \qquad \bmnu\in\cJ, \\
\label{eq:SCS_S_tau}
\bx^{k+1} & = J_\upsilon \circ G_\tau (\bx^k), \qquad k \in \N_0,
\end{align}
starting from an initial guess $\bx^0\in\cV^N$, where $\tau>0$ is the step size and $\upsilon= \tau/\mu$. In practice, we set $\bx^0 = \tau \bA^* \bu$, which provides problem-specific information and is simple to compute. When combined with the FPC strategy, we apply the forward-backward iterations \eqref{eq:SCS_S_tau} to solve a sequence of subproblems of the form \eqref{eq:SCS_l_V1} with an increasing sequence of data fidelity parameters $\mu_0 < \mu_1 < \cdots \le \bar{\mu}$. 
The values of $\mu_\ell$ increase towards the final value $\bar{\mu}$ following growth rule $\mu_{\ell+1} = \min\{4^\ell \mu_0 , \bar{\mu}\}$, and the step sizes $\upsilon$ for soft-thresholding in \eqref{eq:SCS_J_upsilon} decrease for the $\ell$-th subproblem following $\upsilon_\ell = \tau/\mu_\ell$. 
The solver is supplied an initial value $\mu_0 = \frac{\tau}{0.99 \|\bx^0\|_{\cV,\infty}}$ to ensure soft-thresholding reduces negligible components of $G_\tau(\bx^0)$ to $\bm{0}$, leaving only those larger than $0.99 \|\bx^0\|_{\cV,\infty}$ to refine. 
For more details on these choices, see \cite{HYZ08,HYZ10}. We renormalize $\bA$ and $\bu$, setting $\bA \leftarrow \hat{\lambda}_{\max}^{-1/2}\bA$ and $\bu \leftarrow \hat{\lambda}_{\max}^{-1/2}\bu$, where $\hat{\lambda}_{\max}$ is the maximum eigenvalue of $\bA^*\bA$, noting that with this choice it is sufficient to choose $\tau \in [1,2)$ to guarantee convergence. 
In practice, we find $\tau=1$ to be a simple and conservative choice of step size, noting that larger values of $\tau$ may lead to improved convergence of the algorithm, see, e.g., the discussion of \cite[Sections 3.3 \& 4.2.1]{HYZ10}.

The residual weight parameter $\mu_\ell$ and step size parameter $\upsilon_\ell$ are updated once the $k$-th iteration satisfies
\begin{align}
\label{eq:mu_ell_update_criterion}
\frac{ \| \bx^k - \bx^{k-1} \|_{\cV,2} }{ \max\{ \| \bx^{k-1} \|_{\cV,2} , 1\}} < \sqrt{\frac{ \bar{\mu} }{ \mu_{\ell} }} x_{\textnormal{tol}}  \quad \textnormal{and} \quad \mu_\ell \| \bA^* (\bA \bx^k - \bu) \|_{\cV,\infty} - 1 < g_{\textnormal{tol}},
\end{align}
where $x_{\textnormal{tol}}, g_{\textnormal{tol}} > 0$ are user-supplied tolerance constants. As noted in \cite{HYZ10}, the first condition in \eqref{eq:mu_ell_update_criterion} ensures the last step is small relative to the previous iteration, while the second condition checks for complementarity at the current iteration. 
The authors further note that the second condition greatly improves accuracy, however $g_{\textnormal{tol}}$ should be large to ensure faster convergence, recommending the parameterization $x_{\textnormal{tol}} = 10^{-4}$ and $g_{\textnormal{tol}} =  0.2$. Through extensive numerical testing, our results suggest that $x_{\textnormal{tol}}$ is not as important when using Bregman iterations combined with FPC for SCS recovery, and we observe that updating the parameters $\mu_{\ell}$ and $\nu_{\ell}$ more often can improve convergence.
In practice, we find that the choice of 
$x_{\textnormal{tol}} = 1$ and $g_{\textnormal{tol}} = 0.1$,
works well for the problems considered in Section \ref{sec:Hilbert_valued_numerical_results}, while smaller values of $x_{\textnormal{tol}}$ require more forward-backward iterations, but do not improve overall errors. An explanation of this phenomenon may be found in the ``error-forgetting'' and ``error-cancelation'' properties of the Bregman algorithm when combined with FPC \cite{Yin2013}, though further testing is needed to verify our observations. We leave a more detailed investigation on best practices in parameterizing the combined solver for SCS recovery to a future work.

We combine the FPC strategy and forward-backward iterations with Bregman iterative regularization to further improve efficiency of our solver.
Bregman iterations also involve the solution of a sequence of subproblems of the form \eqref{eq:SCS_l_V1}, and the Bregman algorithm for SCS-BPDN \eqref{eq:SCS_BPDN} can be written
\begin{align*}
& \bu^{0} := \bm{0},\; \bz^{0} := 0, \numberthis \label{eq:V2_init} \\
& \textnormal{For } k = 0, 1, \ldots \textnormal{ do} \\
& \qquad \bu^{k+1} = \bu + (\bu^{k} - \bA \bz^{k}), \numberthis \label{eq:fk_+_1} \\
& \qquad \bz^{k+1} = \argmin_{\bz\in\mcV^N} \|\bz\|_{\cV,1} + \frac{\bar{\mu}}{2} \|\bA \bz - \bu^{k+1} \|_{\mcV,2}^2. \numberthis \label{eq:V2_zk_+_1}
\end{align*}
Step \eqref{eq:V2_zk_+_1} is solve using FPC with the parameters above, and
the Bregman iterations continue until 
\begin{align}
\label{eq:Bregman_stop_cond}
\|\bA \bz^k - \bu \|_{\cV,2} < b_{\textnormal{tol}},
\end{align}
for some $k \in \N$ and $b_{\textnormal{tol}}>0$. 
To achieve the residual constraint of problem \eqref{eq:SCS_BPDN}, we choose $b_{\textnormal{tol}}$ proportional to $\eta/\sqrt{m}$, i.e., taking 
\begin{align}
\label{eq:b_tol}
b_{\textnormal{tol}} = 1.2 \cdot \|\bA \bc^\star - \bu \|_{\cV,2}, 
\end{align}
with $\bc^\star = (\bc^\star_\bmnu)_{\bmnu\in\cJ}$ the coefficients of the Galerkin projection of the solution $u$ of problem \eqref{eq:model_problem} onto $\cV^N$ from \eqref{eq:V_J}.
This choice is sufficient to approximately satisfy the residual constraint of \eqref{eq:SCS_BPDN} up to a small multiplicative constant. In actual applications, one can take $b_{\textnormal{tol}}$ proportional to known {\em a priori} error estimates for the Galerkin projection. 
When $\bA$ and $\bu$ are rescaled with respect to $\hat{\lambda}_{\max}$ as above, we define 
$\hat{\lambda}_{\min} = \lambda_{\min}(\bA\bA^*)$,
and set
\begin{align}
\label{eq:mubar}
\bar{\mu} := \sqrt{\frac{N}{\xi}} \cdot \sqrt{\frac{\hat{\lambda}_{\max}}{\hat{\lambda}_{\min}}}, \qquad \qquad \xi = 10^{-5}.
\end{align}
Our numerical experiments show that this choice of $\bar{\mu}$ is sufficient to yield accurate solutions to the Bregman iteration subproblems 
within a reasonable amount of FPC iterations. 
For example, we find the number of FPC iterations is typically $\mathcal{O}(10^2)$ per Bregman iteration.

\section{Numerical results for the approximation of solutions to the parameterized elliptic PDE with SCS recovery}
\label{sec:Hilbert_valued_numerical_results}

In this section we provide several numerical experiments demonstrating the efficiency of SCS relative to other approaches for solving the parameterized problem \eqref{eq:model_problem}. 
For a general coefficient $a(x,\by)$, we do not know the exact solution $u$ to \eqref{eq:model_problem}. 
Therefore, we check convergence of parametric discretizations against ``highly-enriched'' reference sparse grid stochastic collocation approximations which we denote $u_{h,\textnormal{ex}}(x,\by)$ \cite{Stoyanov:2016ci}. 
More specifically, all approximations (including the enriched reference solution) are computed on fixed finite element meshes $\mathcal{T}_h$ (to be described), and our enriched SC approximation is computed using Clenshaw-Curtis abscissas with level $\ell_{\textnormal{ex}}$, larger than that of the other SC approximations. 
We then approximate the relative errors of the expectation and standard deviation in the $H_0^1(D)$-norm for each of the approximations $u_{h,\textnormal{approx}}$ as
\begin{align}
\label{eq:rel_error_metrics}
\varepsilon^{\textnormal{rel-}\bbE}_{h,\textnormal{approx}} & \approx \frac{\|\bbE_\by[u_{h,\textnormal{ex}} - u_{h,\textnormal{approx}}] \|_{H_0^1(D)}}{\|\bbE_\by[u_{h,\textnormal{ex}}]\|_{H_0^1(D)}} \\
\varepsilon^{\textnormal{rel-}\sigma}_{h,\textnormal{approx}} & \approx \frac{\|\Var_\by[u_{h,\textnormal{ex}} - u_{h,\textnormal{approx}}]^2 \|_{H_0^1(D)}}{\|\Var_\by[u_{h,\textnormal{ex}}]^2\|_{H_0^1(D)}}
\end{align}
where $\bbE_\by[\cdot]$ and $\Var_\by[\cdot]$ denote expectation and variance with respect to $\by\in\cU$, respectively.

In our plots and discussion in this section, we use the following abbreviations. For the PCS and SCS methods, we use: ``PCS-TD'' and ``SCS-TD'' to denote the approximations obtained using the PCS and SCS methods in the total degree subspace with fixed total degree order $p$ to be provided. Similarly, for the stochastic Galerkin (SG) method, we use: ``SG-TD'' to denote the SG approximation obtained in the total degree subspace with increasing order $p$, see, e.g., \cite{Xiu2002,Gunzburger:2014hi}. For the stochastic collocation (SC) method, we use: ``SC-CC'' to denote the sparse grid Smolyak approximation constructed on Clenshaw-Curtis abscissas of level $\ell$ increasing, using the doubling growth rule, see \cite{NTW08}. For the Monte Carlo approximation, we use ``MC.'' 

We follow the convention from \cite{BNTT_comp,Elman2011} in identifying the important {\em stochastic degrees of freedom} (SDOF) as the number of random sample points $m$ for the PCS, MC, and SCS approaches and number of structured sparse grid points $m_\ell$ for the SC method at level $\ell$. 
For the SG method, we define the SDOF to be $N$, the cardinality of the index set used in construction. 
As pointed out in \cite{Dexter2016}, this metric is not an accurate representation of the computational complexity of obtaining the approximation, it provides a useful benchmark in comparing the sample complexity of the sampling-based methods.
We include the SG method only to compare the $L^2_\varrho(\cU)$-optimal (w.r.t. SDOF) error of the Galerkin projection against the error of the sampling-based approximations. 

In all examples excluding MC and SC, we use the orthonormal Legendre polynomials $(L_\bmnu)_{\bmnu\in\cJ}$ in parametric discretization. 
For the MC, PCS, and SCS methods, we use the same set of random sample points $(\by_i)_{i=1}^m$ drawn from the uniform distribution $\varrho$. Since each method relies on random sampling, in order to better quantify the average performance of the algorithms, we use the strategy of running 24 trials on all three methods, fixing the initial seed for the pseudorandom number generator on each trial, and then solving each trial's problem with the same set of $m$ samples. We then average the results over the trials when plotting convergence.
In Section \ref{subsec:comparing_JS_CS} we give a comparison of our SCS approach with PCS, the global recovery through compressed sensing-based polynomial approximation of point-wise functionals. 
Section \ref{subsec:JS_vs_others_linear} compares the SCS approach to the SC, SG, and MC methods described above.

\subsection{Comparison of SCS and CS point-wise functional recovery for global approximation of solutions to parameterized PDEs}
\label{subsec:comparing_JS_CS}

In this section we give a comparison of the SCS and PCS recovery techniques for constructing a fully discrete approximation of the solution $u$ to the parameterized elliptic PDE \eqref{eq:model_problem}. 
Both methods use the same set of random points and corresponding normalized samples $\bu^h = (u_h(x,\by_i)/\sqrt{m})_{i=1}^m\in\cV^m_h$, with $u_h(x,\by_i)$ obtained with the finite element method.
We describe details of obtaining the approximations as follows.
For the SCS method, we solve the unconstrained optimization problem \eqref{eq:SCS_l_V1} with the Bregman-FPC algorithm described in Section \ref{sec:algorithms}, using the measurement matrix $\bA$ given in \eqref{eq:defA} and data $\bu^h$. 
To parameterize the stopping criterion \eqref{eq:Bregman_stop_cond} used in the Bregman algorithm, we set $\bc^\star$ in \eqref{eq:b_tol} equal to the approximation $\bc^{h,\star}$ obtained with finite elements for physical discretization and the SG method for the parametric discretization, in the same total degree subspace used by SCS and PCS.
For the PCS recovery approach, we solve the decoupled standard BPDN problems 
with the same measurement matrix $\bA$ from \eqref{eq:defA} and data $\bm{g}^{(k)} := (u_h(x_k, \by_i))_{i=1}^m\in \R^m$ at all points $x_k$ on the finite element mesh $\mathcal{T}_h$. We apply the standard Bregman-FPC algorithm from \cite{HYZ08,YOGD08}, using the same parameters as the SCS method. 
To construct the fully discrete approximation to $u$ from the point-wise approximations, we interpolate the point-wise GPC expansions in the finite element basis.

Recalling the example from \cite{NTW08}, we study problem \eqref{eq:model_problem} on $D=[0,1]^2$ when parameterized with a deterministic load $f \equiv 1$ and diffusion coefficient $a(x,\by)$ given by 
the affine function 
\begin{align*}
a(x,\by) & = 10 + y_1 \left(\frac{\sqrt{\pi}L}{2}\right)^{1/2} + \sum_{i=2}^d \; \zeta_i \; \vartheta_i(x) \; y_i \numberthis \label{eq:transcendental_a_linear_version} \\
\zeta_n & := (\sqrt{\pi} L)^{1/2} \exp\left( \frac{-\left( \left\lfloor \frac{n}{2} \right\rfloor \pi L\right)^2}{8} \right), \text{ for } n>1, \\
\varphi_n(x) & := \left\{ \begin{array}{rl} \sin\left( \left\lfloor\frac{n}{2}\right\rfloor \pi x_1/L_p \right),  \text{ if $n$ is even,} \\ \cos\left(\left\lfloor\frac{n}{2}\right\rfloor \pi x_1/L_p \right),  \text{ if $n$ is odd.} \end{array} \right.
\end{align*}
Here $y_i \sim U(-\sqrt{3},\sqrt{3})$ (uniform on $(-\sqrt{3},\sqrt{3})$) $\forall i = 1,\ldots, d$, and the constant $10$ is chosen to guarantee $a(x,\by)$ satisfies Assumption \ref{ass:A1}. 
For $x_1\in[0,b]$, let $L_c$ be a desired physical correlation length for the random field $a(x,\by)$, chosen so that the random variables $a(x_1,\by)$ and $a(x_1',\by)$ become essentially uncorrelated for $|x_1-x_1'|\gg L_c$, and define $L_p = \max\{b,2L_c\}$ and $L=L_c/L_p$.

In the following example, with $L_c = 1/4$ and $d=100$ in \eqref{eq:transcendental_a_linear_version}, we construct the SCS-TD and PCS-TD approximations
in a total degree basis of order $p=2$ having cardinality $N={d + p \choose p} = 5151$. 
We compute the random samples of our FE approximations 
using a quasi-uniform mesh $\mathcal{T}_h$ having 206 degrees of freedom, corresponding to a maximum mesh size of $h\approx 1/16$.
The left panel of Figure \ref{fig:CS_vs_CSD} shows a comparison of the relative error statistic $\varepsilon_{h,\textnormal{approx}}^{\textnormal{rel}-\bbE}$ from \eqref{eq:rel_error_metrics} for the SCS-TD and PCS-TD approximations. 
For each data point, we increase the number of random samples $m$ following the rule $m_k = \lceil k N/8 \rceil$ for $k=1,2,\ldots,7$, keeping $N=5151$ fixed.
The top middle and top right panels display the decay of the SG coefficients $\bc^{h,\star}_{\bmnu}$ in $\|\cdot\|_\cV$-norm, before and after sorting by magnitude, and the bottom middle and bottom right panels show the decay of $|\bc_{\bmnu}^{h,\star}(x_i)|$ at a selection of points $x_i$ on the mesh for $\mathcal{T}_h$, before and after sorting by magnitude. 
We note that this choice of parameterization results in a solution $u$ with highly anisotropic coefficient decay, requiring only $\mathcal{O}(100)$ terms to approximate to machine epsilon, and is therefore ideal for sparse approximation methods such as SCS and PCS.

While the difference in errors between the SCS-TD and CS-TD approximations is somewhat dramatic, it should be noted that both solvers are parameterized using the tolerance $b_{\textnormal{tol}} = 1.2 \cdot \|\bA \bc^{h,\star} - \bu\|_{\cV,2}$, a global error statistic, in testing for convergence of the Bregman iterations. 
Since the PCS-TD approximations involve a sequence of point-wise solves, this value of $b_{\textnormal{tol}}$ may lead to poor point-wise reconstructions as it does not account for the error of truncation at a particular point $x_k$ in the mesh. 
In practice however, such point-wise truncation estimates are often unavailable for practical parameterized PDE problems. 
On the other hand, {\em a priori} estimates in global energy norms have been shown for a wide variety of parameterized PDEs under reasonable assumptions on the input data.

\begin{figure}[ht]
\begin{center}
\includegraphics[clip=true,trim=38mm 5mm 40mm 14mm,width=0.98\textwidth]{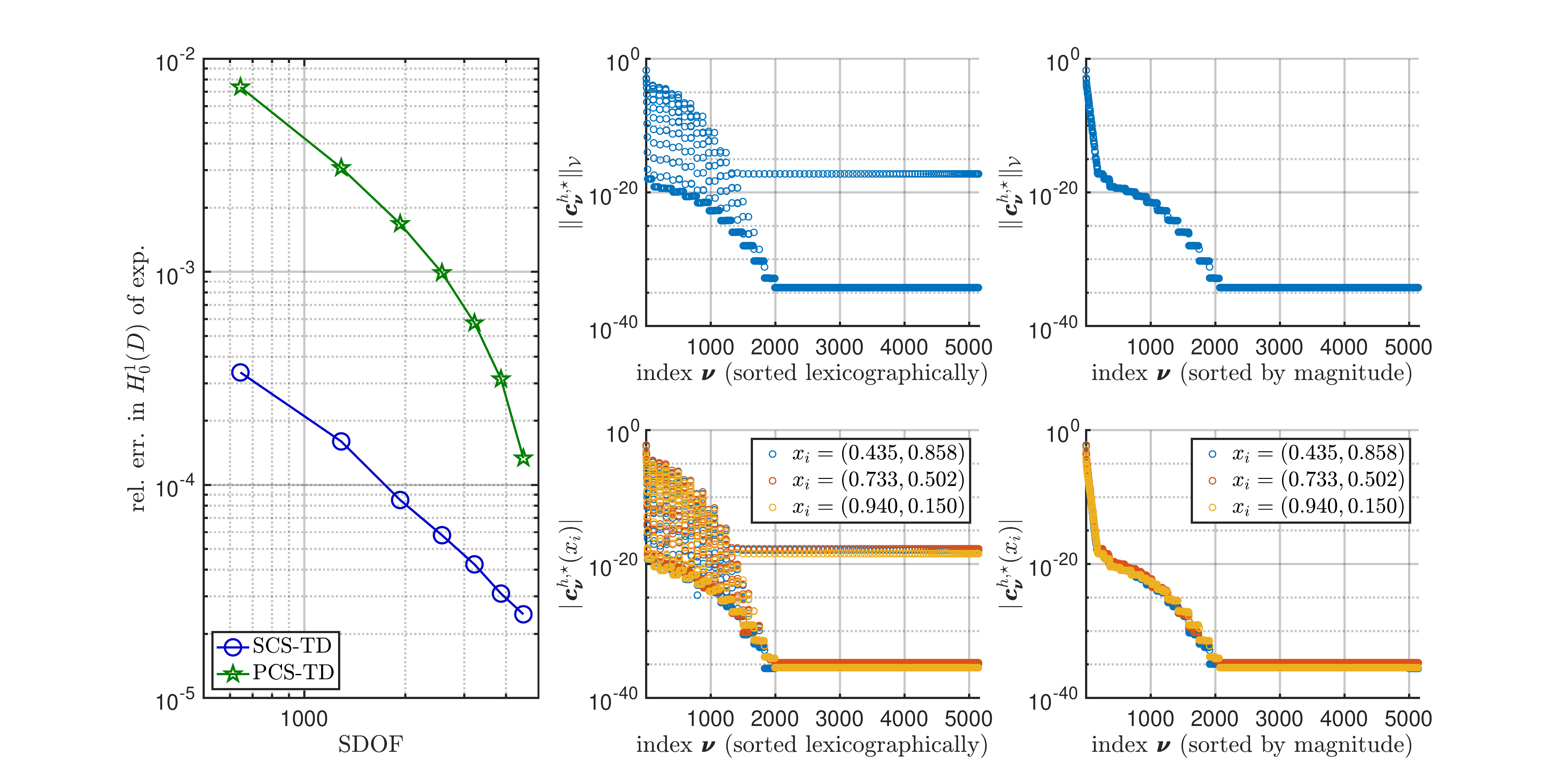}
\end{center}
\caption{\textbf{(left)} Comparison of relative error statistics $\varepsilon_{h,\textnormal{approx}}^{\textnormal{rel}-\bbE}$ from \eqref{eq:rel_error_metrics} for the SCS method (SCS-TD) and compressed sensing point-wise functional recovery (PCS-TD) methods, both computed with a total degree basis of order $p=2$ in $d=100$ dimensions having cardinality $N=5151$. {\bf(center)} Magnitudes of the coefficients in \textbf{(top)} energy norm and \textbf{(bottom)} pointwise at three physical locations $x_i$ sorted lexicographically, {\bf(right)} same as center after sorting by largest in magnitude. Here $\bc^{h,\star}$ is obtained with the stochastic Galerkin method in the same total degree polynomial subspace.}
\label{fig:CS_vs_CSD}
\end{figure}

\subsection{Comparison of SCS with SC, SG, and MC methods for parameterized PDEs}
\label{subsec:JS_vs_others_linear}

In this section, we give a comparison of the approximation errors that can be obtained solving the stochastic elliptic PDE \eqref{eq:model_problem} with the SCS method and the SG, MC, and SC methods. 
We focus on two types of parameterizations, namely the affine coefficient from \eqref{eq:transcendental_a_linear_version} over a range of values of dimension $d$ and correlation length $L_c$, and its log-transformed version, i.e., $\log(a(x,\by) - 0.5)$. 

The first example we study is that of the affine coefficient \eqref{eq:transcendental_a_linear_version} with fixed $L_c=1/4$ and increasing $d$. As in the previous section, the physical FE discretization for this problem uses a fixed quasi-uniform mesh $\mathcal{T}_h$ of $D=[0,1]^2$ having $\Jh=206$ degrees of freedom, i.e., corresponding to a maximum mesh size of $h\approx 1/16$ in $n=2$ physical dimensions. Each row of Figure \ref{fig:CS_vs_others_increasing_dimension} plots the performance of the methods with respect to the relative error metrics $\varepsilon^{\textnormal{rel-}\bbE}_{h,\textnormal{approx}}$ on the left and $\varepsilon^{\textnormal{rel-}\sigma}_{h,\textnormal{approx}}$ on the right from \eqref{eq:rel_error_metrics}, while keeping the order $p$ of the total degree polynomial subspace used in computing the SCS-TD approximation fixed. 
Each row increases $d$, from $20$ on the top row to $60$ on the middle row, and $100$ on the bottom row. 

As mentioned in Section \ref{subsec:comparing_JS_CS}, this choice of correlation length $L_c$ results in highly anisotropic dependence of the solution $u$ on the parameter vector $\by$, with most of the important terms in \eqref{eq:transcendental_a_linear_version} corresponding to low index $i$ of $y_i$. 
Hence, as $d$ increases, the relative sparsity $s/N$ decreases, since $N=\#(\cJ)$ depends exponentially on $d$ for total degree $\cJ$, while the best $s$ terms of \eqref{eq:expansionPC} scale approximately linearly with $d$ under the coefficient \eqref{eq:transcendental_a_linear_version}.
As a result, in higher dimensions the SCS-TD approximation outperforms the SC and MC methods dramatically.
This can be seen as a consequence of the fact that the SC-CC method is using an isotropic growth rule so that $m_\ell$ is growing exponentially in $d$. 
Under an anisotropic growth rule, adapted to the coefficient decay, better performance of the SC-CC approximation would be observed.
However, such anisotropic rules often require detailed knowledge of the parametric dependence. 

Figure \ref{fig:CS_vs_others_increasing_correlation_length} displays the effect of doubling the correlation length $L_c$ in the coefficient $a(x,\by)$ from \eqref{eq:transcendental_a_linear_version} to $1/2$ in $d=100$ dimensions, resulting in an expansion of the solution with even fewer large terms $\|\bc_\bmnu\|_\cV$. 
For the relative standard deviation error statistic $\varepsilon^{\textnormal{rel-}\sigma}_{h,\textnormal{approx}}$, we begin to see the SCS-TD method approaches the error of the final point of the SG-TD method, both of which are constructed using the same basis $(L_\bmnu)_{\bmnu\in\cJ}$ of a total degree polynomial space of order $p=2$.
As $m_k = \lceil k N /8\rceil$, $k=1,2,\ldots,7$, approaches $N = \#(\cJ)$ (the cardinality of the set used in both constructions), the error of the SCS-TD approximation begins to stagnate, suggesting that the Bregman-FPC algorithm has converged to the tolerance $b_{\textnormal{tol}}$. Since $b_{\textnormal{tol}} = 1.2 \cdot \|\bA \bc^{h,\star} - \bu \|_{\cV,2}$, with $\bc^{h,\star}$ the SG solution of total order $p=2$, this suggests the relative errors of both approximations are of the same order. 

Both of Figures \ref{fig:CS_vs_others_increasing_dimension} \& \ref{fig:CS_vs_others_increasing_correlation_length} highlight a key advantage of the SCS approach that is common to all CS-based methods of approximation. Without {\em a priori} knowledge of the coefficient decay, or use of anisotropic weighting in the set $\cJ$, the SCS-TD approximation is able to naturally detect the underlying anisotropy of the problem in refinement as the number of random samples $m$ increases. 
Moreover, our results suggest that simply choosing $\cJ$ large enough to surely cover the best $s$ terms in the expansion \eqref{eq:expansionPC} is sufficient to yield highly accurate approximations. 
These approximations would only be further enhanced if {\em a priori} knowledge of the coefficient decay is given and anisotropic weights are incorporated into the set $\cJ$.
Indeed, we also expect the lower set-based $\ell_1$-weighting strategies for smooth function approximation in our previous work 
\cite{ChkifaDexterTranWebster18} to also yield improved accuracy. We leave a detailed study of such weighting techniques to a future work.
 
Finally, Figure \ref{fig:CS_vs_others_nonlinear_parameterization} shows the result of solving \eqref{eq:model_problem} with the log-transformed example $\log(a(x,\by)-0.5)$ with $a(x,\by)$ from \eqref{eq:transcendental_a_linear_version} and correlation length $L_c = 1/8$, in $d=17$ dimensions. 
For this problem, the stochastic Galerkin system becomes prohibitively dense due to the transcendental dependence on the parameterization. As a result, we were unable to obtain an SG-TD approximation.
Hence, we only compare the SCS, SC, and MC methods, using the SC error as an approximation to $\|\bA\bc^{h,\star}-\bu\|_{\cV,2}$ in parameterizing $b_{\textnormal{tol}}$. 
Here all approximations are computed on a fixed finite element mesh with $\Jh = 713$ degrees of freedom, i.e., corresponding to a maximum mesh size of $h\approx 1/32$. 
We only plot the SCS-TD and MC methods for $m_k = \lceil k N/8 \rceil$ for $k=1,\ldots,4$.
In both the 
$\varepsilon^{\textnormal{rel-}\sigma}_{h,\textnormal{approx}}$ and $\varepsilon^{\textnormal{rel-}\bbE}_{h,\textnormal{approx}}$ error metrics, the SCS-TD method outperforms the MC and SC-CC approximations with respect to the number of samples, even in the case of slower anisotropic decay associated with $L_c=1/8$.
Furthermore, the rate of convergence of the SCS-TD approximation is $\mathcal{O}(m^{-3/2})$, the fastest out of all methods included, further bolstering our claim of the ability of our approach to accurately detect and refine the most important terms of the expansion \eqref{eq:expansionPC}.

\begin{figure}[ht]
\begin{center}
\includegraphics[clip=true,trim=00mm 04mm 14mm 00mm,width=0.40\textwidth]{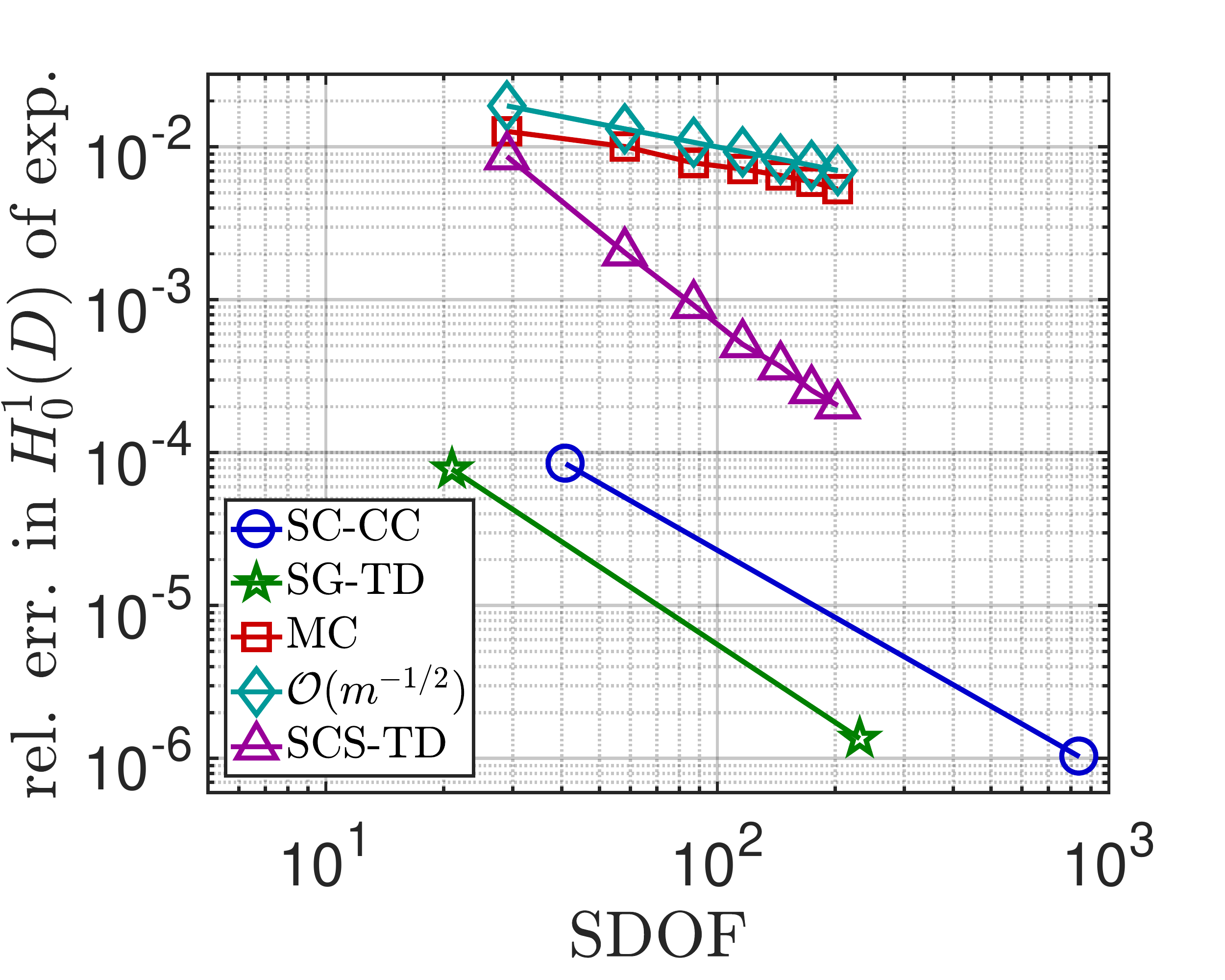}
\includegraphics[clip=true,trim=00mm 04mm 14mm 00mm,width=0.40\textwidth]{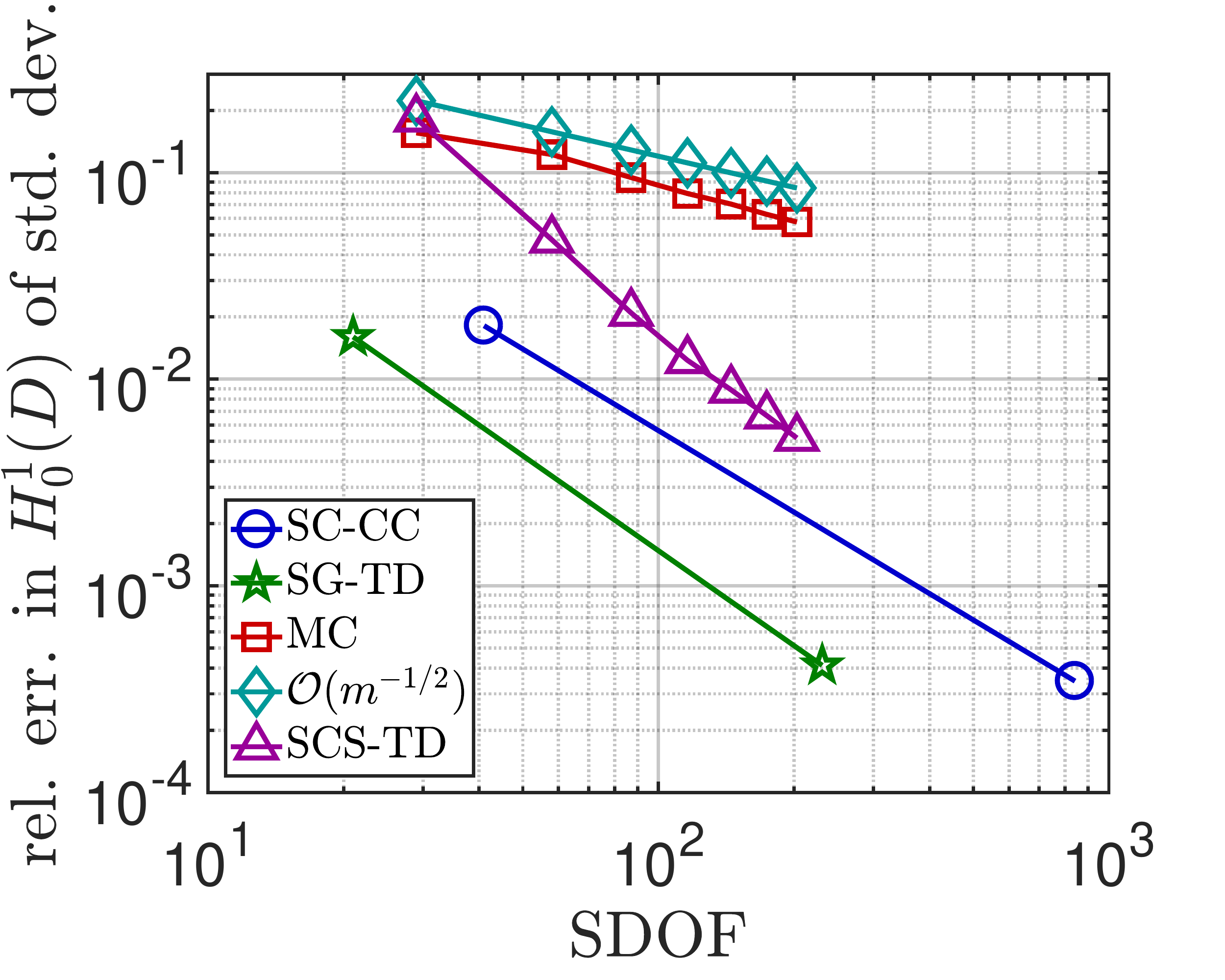}
\includegraphics[clip=true,trim=00mm 04mm 14mm 00mm,width=0.40\textwidth]{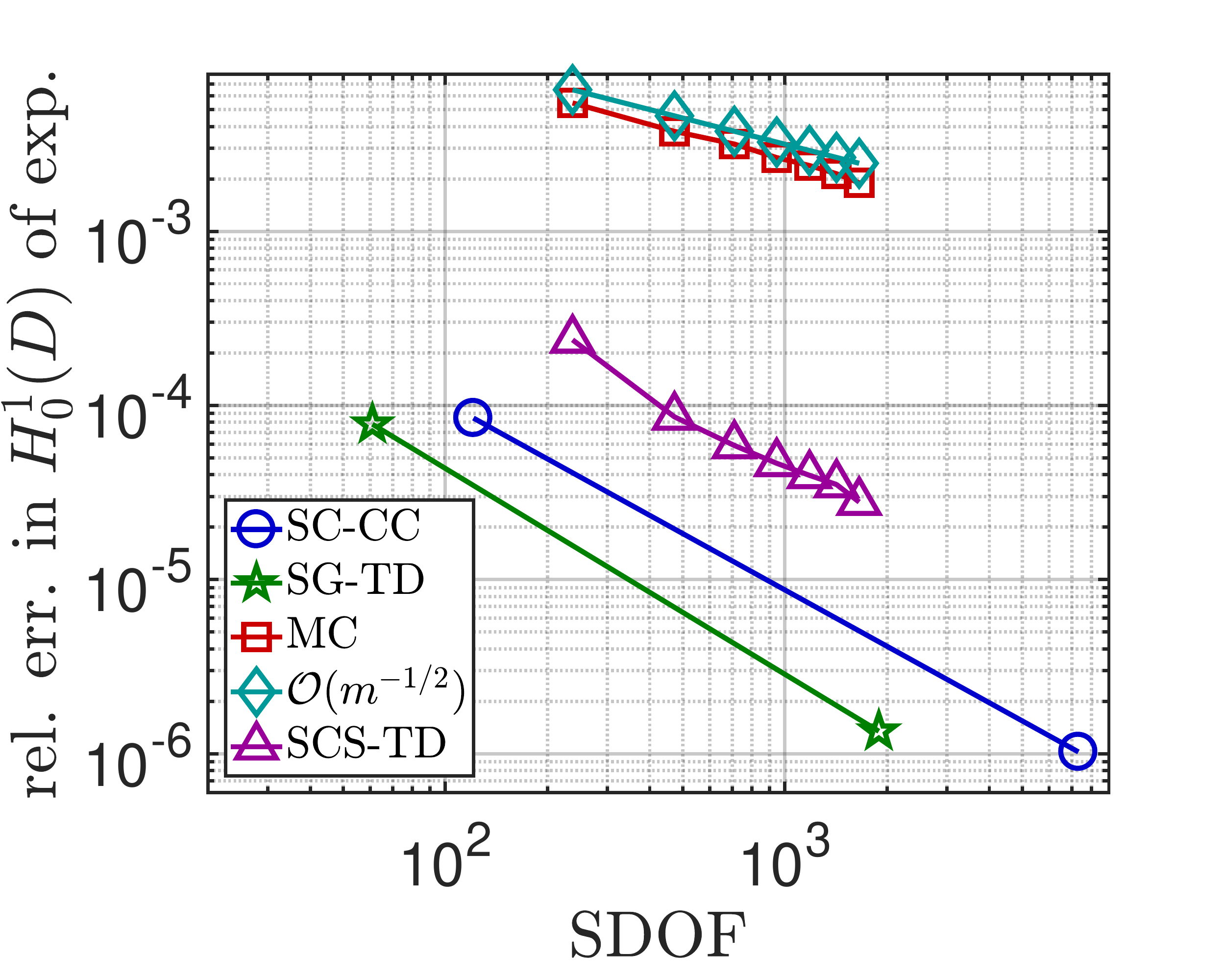}
\includegraphics[clip=true,trim=00mm 04mm 14mm 00mm,width=0.40\textwidth]{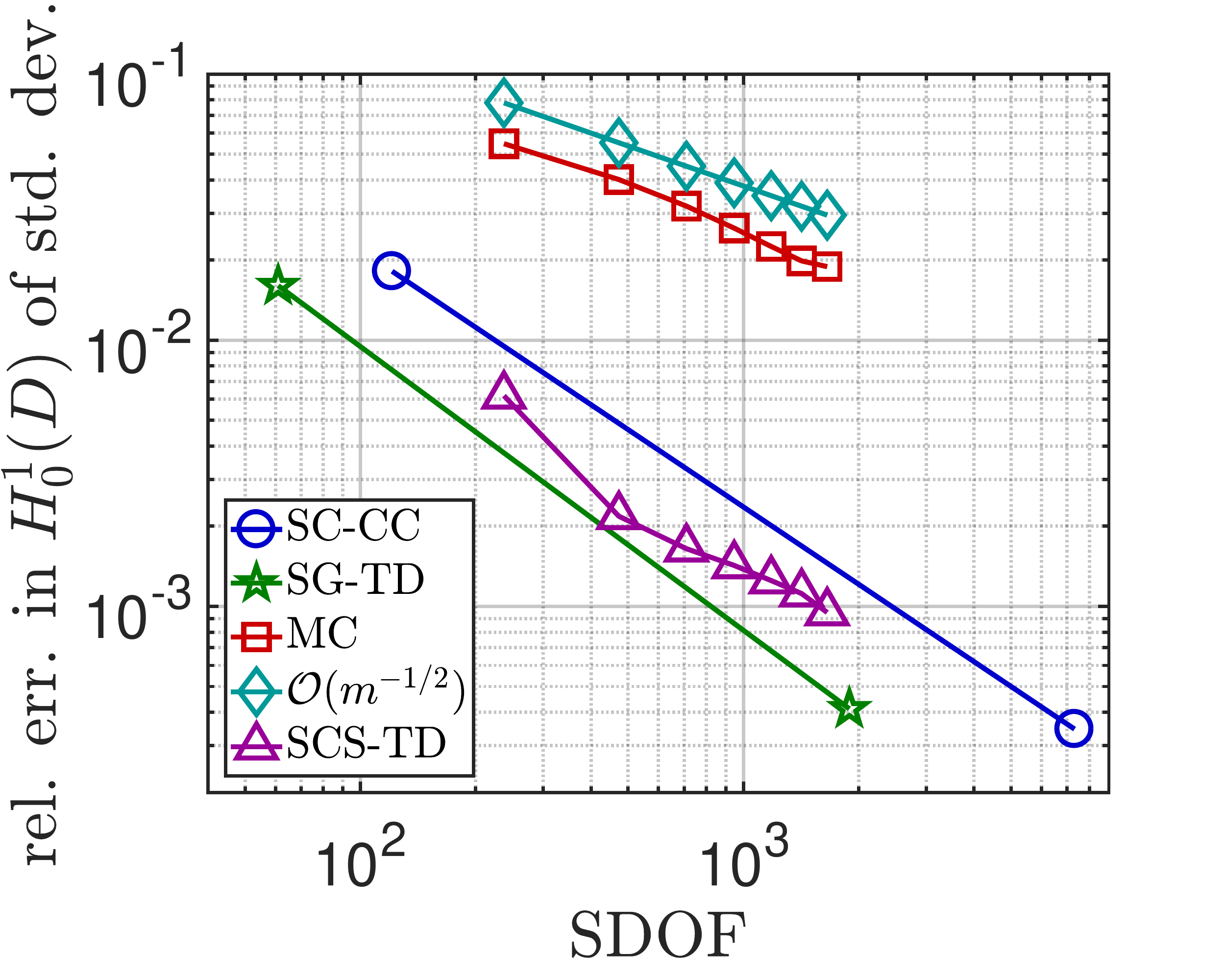}
\includegraphics[clip=true,trim=00mm 04mm 14mm 00mm,width=0.40\textwidth]{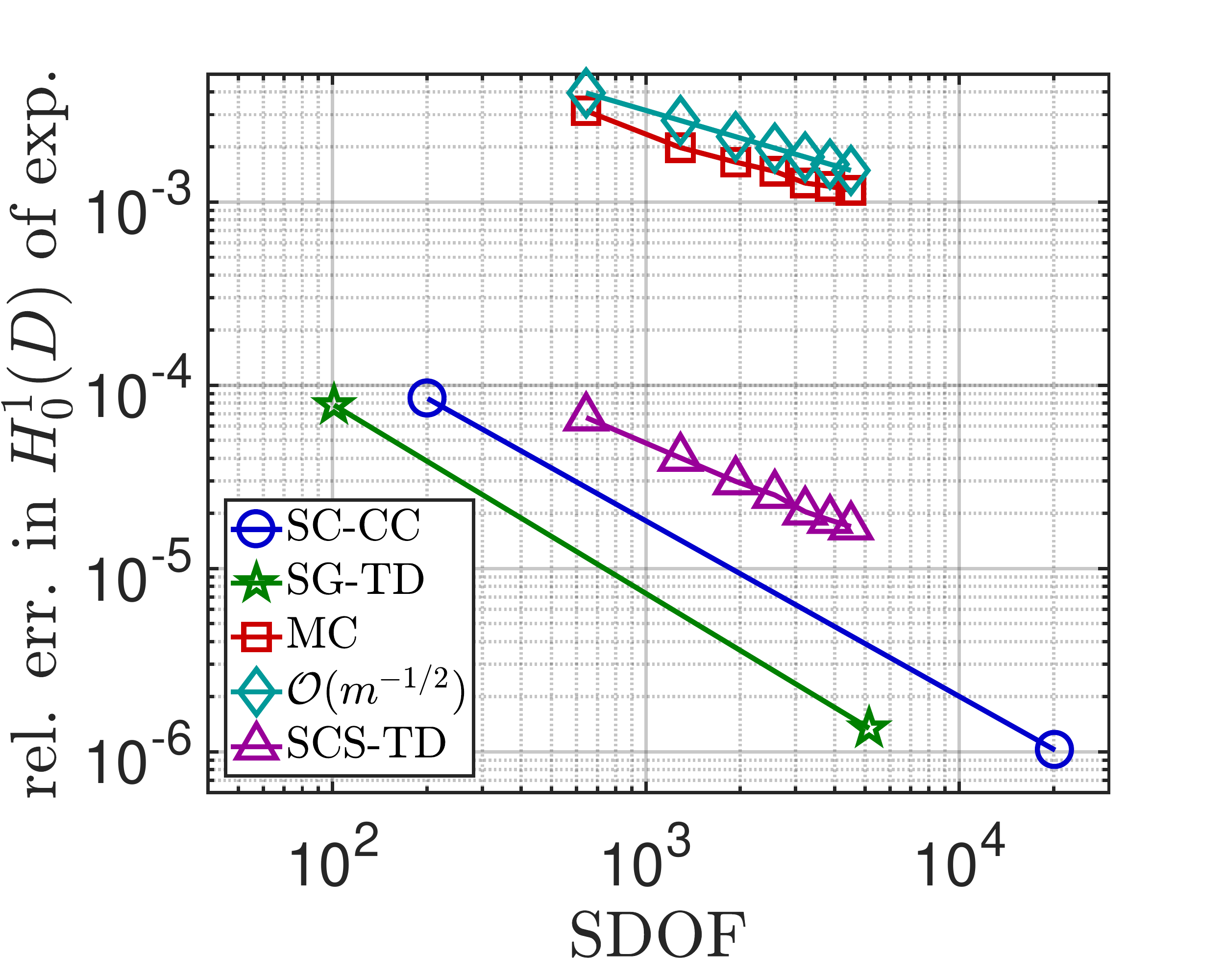}
\includegraphics[clip=true,trim=00mm 04mm 14mm 00mm,width=0.40\textwidth]{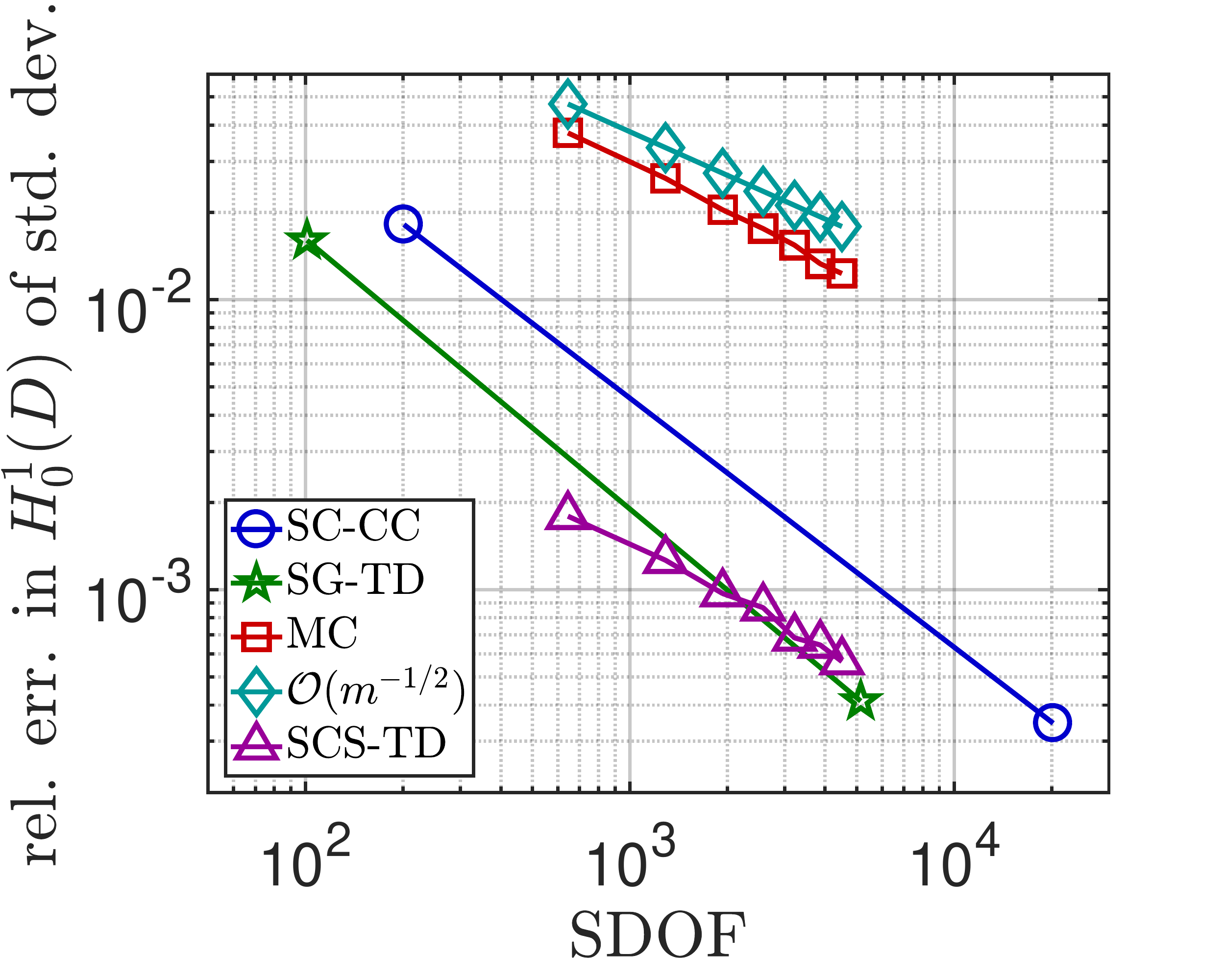}
\end{center}
\caption{Comparison of relative approximation errors $\varepsilon^{\textnormal{rel-}\bbE}_{h,\textnormal{approx}}$ \textbf{(left)} and $\varepsilon^{\textnormal{rel-}\sigma}_{h,\textnormal{approx}}$ \textbf{(right)} from \eqref{eq:rel_error_metrics} for the $L=2,3$ stochastic collocation (SC-CC),  $p=1,2$ stochastic Galerkin (SG-TD), Monte Carlo (MC), and total degree order $p=2$ simultaneous compressed sensing (SCS-TD) methods for solving \eqref{eq:model_problem} with coefficient \eqref{eq:transcendental_a_linear_version} and correlation length $L_c=1/4$. {\bf(top)} $d= 20$, $N=231$, {\bf(middle)} $d=60$, $N=1891$, {\bf(bottom)} $d=100$, $N=5151$.}
\label{fig:CS_vs_others_increasing_dimension}
\end{figure}

\begin{figure}[ht]
\begin{center}
\includegraphics[clip=true,trim=00mm 00mm 10mm 00mm,width=0.40\textwidth]{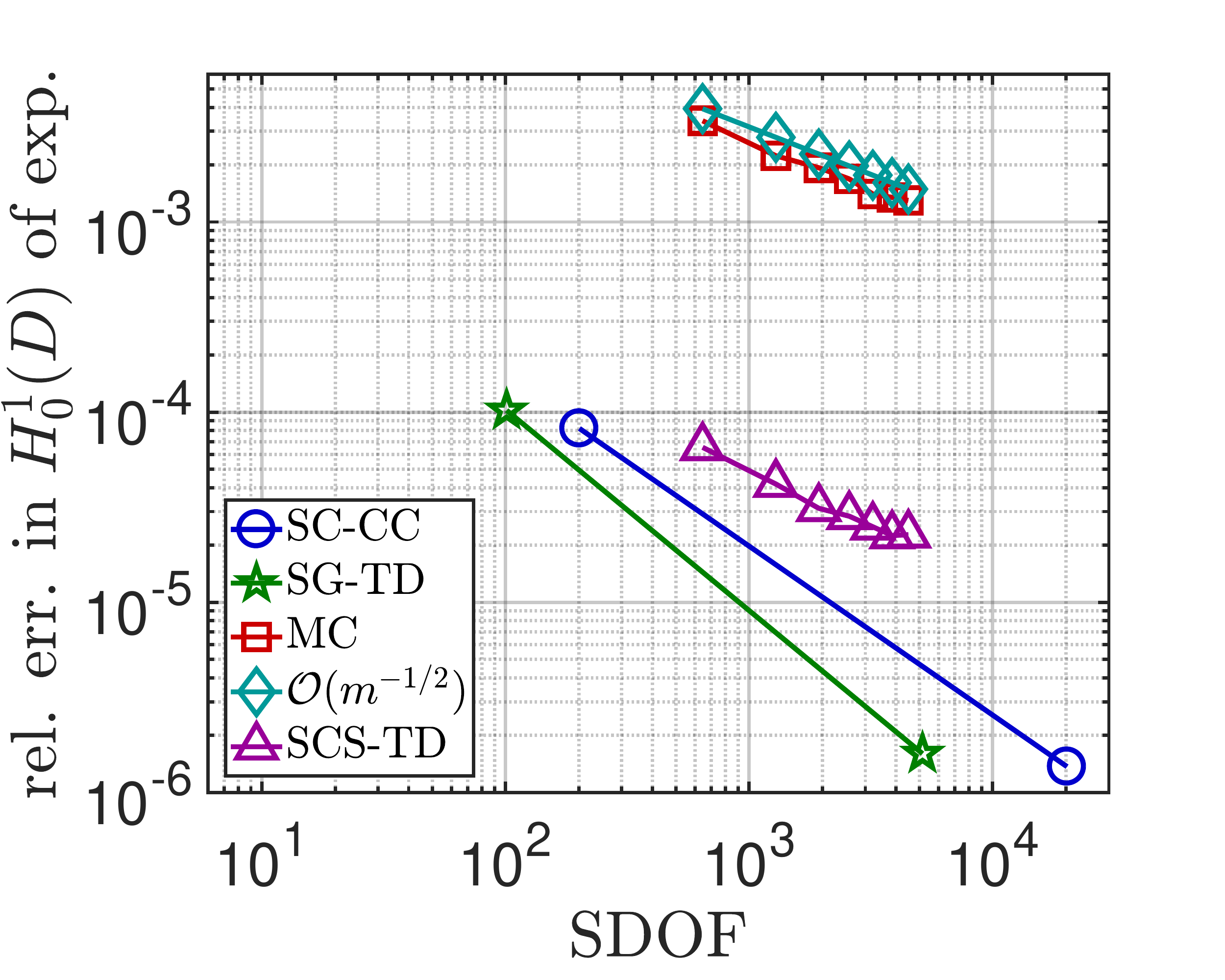}
\includegraphics[clip=true,trim=00mm 00mm 10mm 00mm,width=0.40\textwidth]{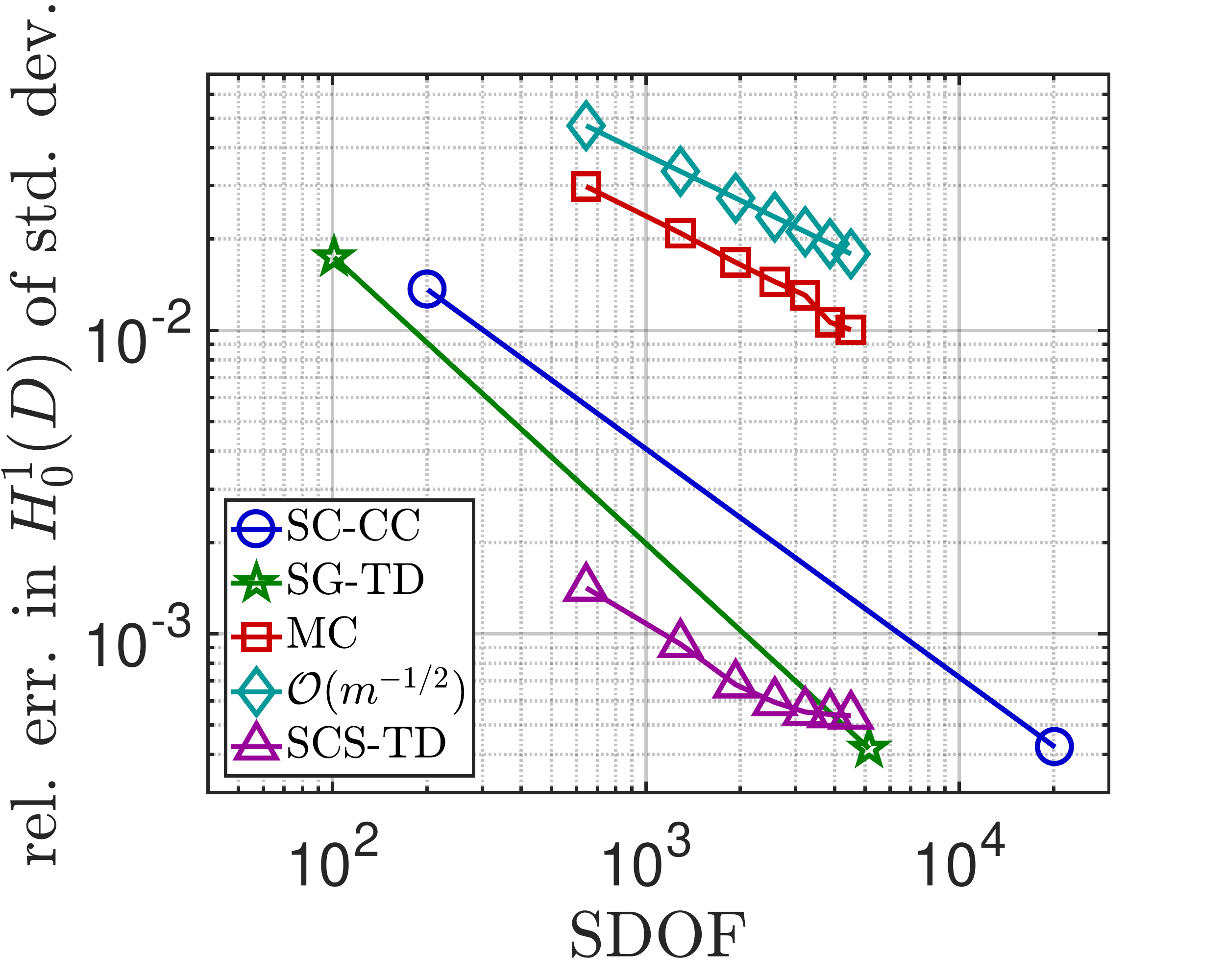}
\end{center}
\caption{Comparison of relative approximation errors $\varepsilon^{\textnormal{rel-}\bbE}_{h,\textnormal{approx}}$ \textbf{(left)} and $\varepsilon^{\textnormal{rel-}\sigma}_{h,\textnormal{approx}}$ \textbf{(right)} from \eqref{eq:rel_error_metrics} for the $L=2,3$ stochastic collocation (SC-CC),  $p=1,2$ stochastic Galerkin (SG-TD), Monte Carlo (MC), and total degree order $p=2$ with $N=5151$ simultaneous compressed sensing (SCS-TD) methods for solving \eqref{eq:model_problem} with coefficient \eqref{eq:transcendental_a_linear_version} and correlation length $L_c=1/2$ in $d= 100$ dimensions.}
\label{fig:CS_vs_others_increasing_correlation_length}
\end{figure}

\begin{figure}[ht]
\begin{center}
\includegraphics[clip=true,trim=00mm 04mm 14mm 00mm,width=0.40\textwidth]{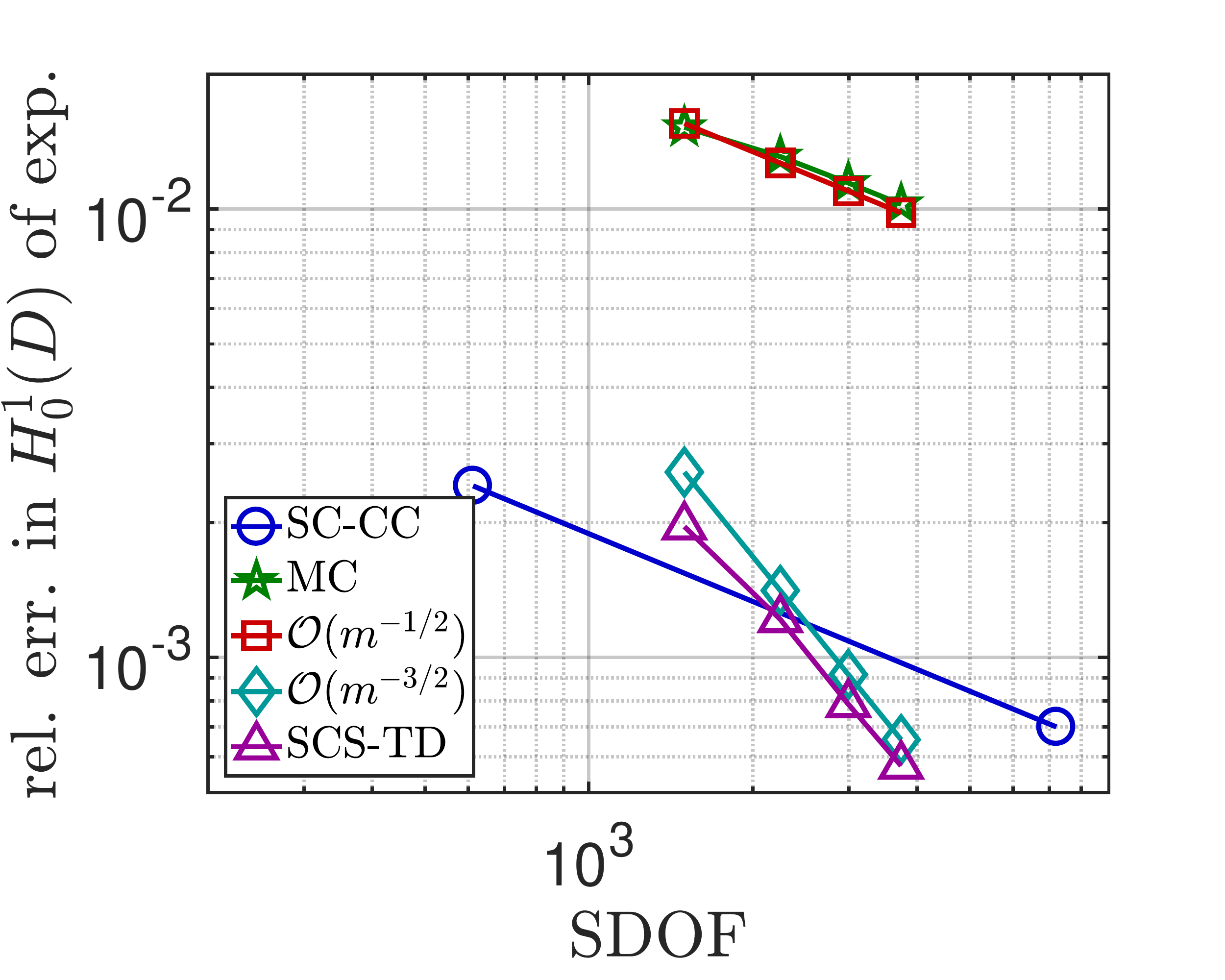}
\includegraphics[clip=true,trim=00mm 04mm 14mm 00mm,width=0.40\textwidth]{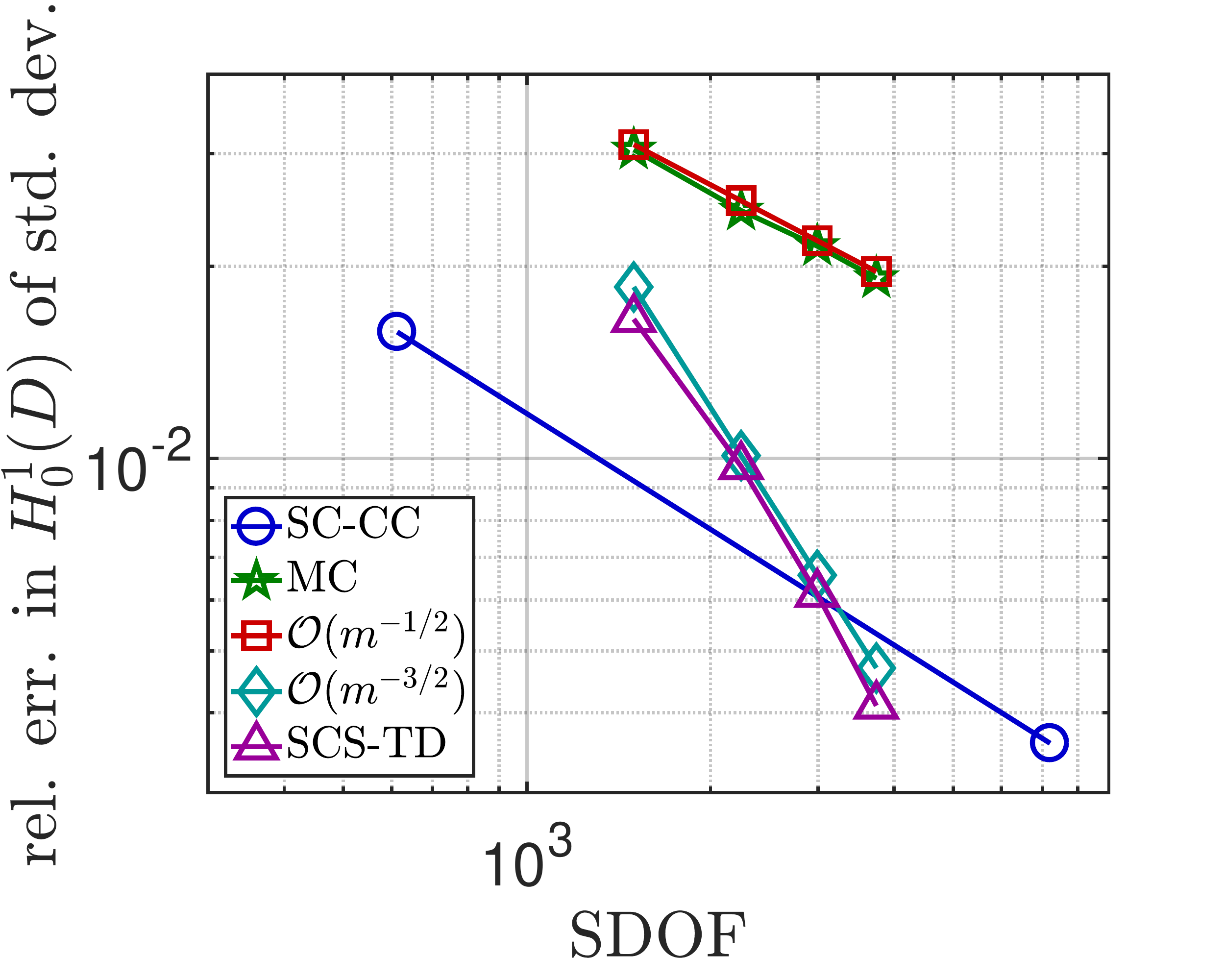}
\end{center}
\caption{Comparison of relative approximation errors $\varepsilon^{\textnormal{rel-}\bbE}_{h,\textnormal{approx}}$ \textbf{(left)} and $\varepsilon^{\textnormal{rel-}\sigma}_{h,\textnormal{approx}}$ \textbf{(right)} from \eqref{eq:rel_error_metrics} for the $L=2,3$ stochastic collocation (SC-CC), Monte Carlo (MC), and total degree order $p=4$ with $N=5985$ simultaneous compressed sensing (SCS-TD) methods for solving \eqref{eq:model_problem} with coefficient $\log(a(x,\by)-0.5)$ with $a(x,\by)$ from \eqref{eq:transcendental_a_linear_version} and correlation length $L_c=1/8$ in $d= 17$ dimensions.}
\label{fig:CS_vs_others_nonlinear_parameterization}
\end{figure}

\section{Conclusion}
\label{sec:conclusion}

In this work, we have presented a novel {\em simultaneous compressed sensing} approach for the approximation of solutions to parameterized PDEs.
In the development of our approach, we derived uniform recovery results for sparse Hilbert-valued vector recovery, combining such results with extensions of error estimates for standard BPDN to the SCS recovery setting and 
quasi-optimal error estimates to prove rigorous bounds on the errors of  SCS approximations.
In particular, we establish fast, sub-exponential rates of convergence under the general assumption that the true solution has orthonormal expansion with rapidly decaying coefficients obeying certain bounds.
Such assumptions have been shown to be valid in context of best $s$-term and quasi-optimal approximations for a large class of parameterized PDE problems.
Moreover, our approach is distinct from previous works on sparse approximations of parameterized PDEs with compressed sensing, as it enables global fully discrete approximation of solutions. 

In the development of our global recovery approach, we relate the SCS reconstruction problem to the concept of joint-sparse recovery. 
Joint-sparse recovery has been studied within the compressed sensing community as a technique for simultaneous recovery of multiple measurement vectors, but thus far had not been applied to parameterized PDEs.
In certain scenarios, one can identify the equivalence of the joint-sparse and Hilbert-valued recovery problems.
However, for general problems we note that the two problems are not equivalent, and choice of norm is key in regularizing problems to yield accurate sparse approximations.

Our numerical results demonstrate the efficiency of our approach with respect to sample complexity, giving comparison to several popular methods of solution.
Indeed, highly accurate approximations can be obtained with a relatively small number of samples in high dimensional problems.
When comparing the approximation errors of our SCS recovery approach with standard Monte Carlo, using exactly the same samples, it is clear that solving the nonlinear interpolation problem via our energy-norm regularized reformulation of standard BPDN yields superior approximations. 
Moreover, our results demonstrate the ability of the SCS recovery method to naturally detect underlying problem anisotropy.
Since our approach is based on a simple modification of standard CS, we expect incorporation of the structured sparse recovery approaches from our previous work \cite{ChkifaDexterTranWebster18} to further enhance the performance of SCS. 
We leave the study of strategies for enhancing smooth function recovery and the overall computational complexity of the SCS recovery problem to a future work.

\bibliographystyle{siam}      
\bibliography{database3}   
\end{document}